\numberwithin{equation}{section}
\numberwithin{theorem}{section}
\numberwithin{lemma}{section}
\numberwithin{remark}{section}
\begin{document}

\title{Algorithm implementation and numerical analysis for the two-dimensional tempered fractional Laplacian}

%\titlerunning{Short form of title}        % if too long for running head

\author{Jing Sun$^{1}$, Daxin Nie$^{1}$, Weihua Deng$^{*,1}$
}

%\authorrunning{Short form of author list} % if too long for running head

\institute{
$^{*}$Corresponding author. E-mail: dengwh@lzu.edu.cn            \\
$^{1}$School of Mathematics and Statistics, Gansu Key Laboratory of Applied Mathematics and Complex Systems, Lanzhou University, Lanzhou 730000, P.R. China \\
}
%E-mail: Eli.Barkai@biu.ac.il}

\date{Received: date / Accepted: date}
% The correct dates will be entered by the editor

\maketitle

\begin{abstract}
Tempered fractional Laplacian is the generator of the tempered isotropic L\'evy process [W.H. Deng, B.Y. Li, W.Y. Tian, and P.W. Zhang, Multiscale Model. Simul., 16(1), 125-149, 2018]. This paper provides the finite difference discretization for the two dimensional tempered fractional Laplacian $(\Delta+\lambda)^{\frac{\beta}{2}}$. Then we use it to solve the tempered fractional Poisson equation with Dirichlet boundary conditions and derive the error estimates. Numerical experiments verify the convergence rates and effectiveness of the schemes.

%In this paper, we provide a finite difference scheme by discretizing the tempered fractional Laplacian$(\Delta+\lambda)^{\frac{\beta}{2}}$ with singular integral in two dimension. We prove that for $u\in C^2(\mathbb{R}^2)$, the accuracy is $O(h^{2-\beta})$. Moreover, we solve the tempered fractional Poisson equation with Dirichlet boundary conditions and derive the error estimates for the present method. Numerical experiments on given exact solution verify the expected convergence rates.

\keywords{tempered fractional Laplacian \and finite difference method \and bilinear interpolation}
% \PACS{PACS code1 \and PACS code2 \and more}
% \subclass{MSC code1 \and MSC code2 \and more}
\end{abstract}

%\newpage

\section{Introduction}

  Anomalous diffusion refers to the movements of particles whose trajectories' second moment is a nonlinear function of the time $t$ \cite{metzler}, being widely observed in the natural world \cite{Klafter2005} and having many applications in various fields, such as physical systems \cite{Hilfer2000}, stochastic dynamics \cite{Bogdan2003}, finance \cite{Mainardi}, image processing \cite{Buades} and so on. The fractional Laplacian $\Delta^{\beta/2}$ is the fundamental non-local operator for modelling anomalous dynamics, introduced as the infinitesimal generator of a $\beta$-stable L\'evy process  \cite{Applebaum2009,Gunzburger2013,Pozrikidis2016}, being the scaling limit of the L\'evy flight. The extremely long jumps make the second and all higher order moments of the L\'evy flight diverge, sometimes failing to well model some practically physical processes. To overcome this, a trivial idea is to introduce a parameter $\lambda$ (a sufficiently small number) to exponentially temper the isotropic power law measure of the jump length; the new processes generate the tempered fractional Laplacian $(\Delta+\lambda)^{\frac{\beta}{2}}$, being physically introduced and mathematically defined in \cite{Deng:17} with its definition
  %  The reference  \cite{Deng:17} introduce the tempered fractional Laplacian $(\Delta+\lambda)^{\frac{\beta}{2}}$ physically and define it mathematically. The first definition mathematically is given by the singular integral
\begin{equation}\label{idef1}
(\Delta+\lambda)^{\frac{\beta}{2}}u(\mathbf{x})=-c_{n,\beta,\lambda}{\rm P.V.}\int_{\mathbb{R}^n}\frac{u(\mathbf{x})-u(\mathbf{y})}{e^{\lambda|\mathbf{x}-\mathbf{y}|}|\mathbf{x}-\mathbf{y}|^{n+\beta}}d\mathbf{y} ~~~~~{\rm for}~~\beta \in(0,2),
\end{equation}
where
\begin{equation}
c_{n,\beta,\lambda}=
\frac{\Gamma(\frac{n}{2})}{2\pi^{n/2}|\Gamma(-\beta)|},
\end{equation}
and P.V. denotes the principal value integral, and $\Gamma(t)=\int_{0}^{\infty}s^{t-1}e^{-s}ds$ is the Gamma function; as to its Fourier transform \cite{Deng:17}, there is
 \begin{equation}\label{idef2}
 \begin{split}
 &\mathcal{F}\left((\Delta +\lambda)^{\beta/2}u(\mathbf{x})\right)\\
 &=(-1)^{\lfloor\beta\rfloor}\left(\lambda^\beta-(\lambda^2+|\mathbf{k}|^2)^\frac{\beta}{2}~_2F_1\left(-\frac{\beta}{2},\frac{n+\beta-1}{2};\frac{n}{2};\frac{|\mathbf{k}|^2}{\lambda^2+|\mathbf{k}|^2}\right)\right)\mathcal{F}({u}\left(\mathbf{x})\right),
 \end{split}
 \end{equation}
 where $\beta\in (0,1)\bigcup(1,2)$, $\lfloor\beta\rfloor$ means the biggest integer, being smaller than or equal to $\beta$, and $_2F_1$ is the Gauss hypergeometric function \cite{Abramowitz}.
Evidently, when $\lambda=0$, the expression (\ref{idef1}) reduces to the fractional Laplacian in the singular integral form \cite{Stein1970,Kwa2015}
\begin{equation}\label{idef3}
(\Delta)^{\frac{\beta}{2}}u(\mathbf{x})=-c_{n,\beta} {\rm P.V.}\int_{\mathbb{R}^n}\frac{u(\mathbf{x})-u(\mathbf{y})}{|\mathbf{x}-\mathbf{y}|^{n+\beta}}d\mathbf{y} ~~~~~{\rm for} ~~\beta \in(0,2),
\end{equation}
where
\begin{equation}
  c_{n,\beta}=\frac{\beta\Gamma(\frac{n+\beta}{2})}{2^{1-\beta}\pi^{n/2}\Gamma(1-\beta/2)}.
\end{equation}

The main challenge for numerically solving (\ref{idef1}) and (\ref{idef3}) comes from their non-locality and weak singularity, especially in high dimensional cases. Currently, fractional Laplacian is the trendy and hot topic in both mathematical and numerical fields.
%Because the non-locality and singularity of the expression (\ref{idef1}) and (\ref{idef3}), the numerical methods still remain challenging. So far, there have been some numerical methods for the fractional Laplacian in the singular integral form (\ref{idef3}).
For example, \cite{ACOSTA2017} introduces the finite element approximation for the $n$-dimensional Dirichlet homogeneous problem about fractional Laplacian and \cite{Acosta2017} presents the code employed for implementation in two dimension;  \cite{Huang2014} provides a finite difference-quadrature approach and gives its convergence proof; \cite{Huang1611} proposes several finite difference discretizations and tackles the non-locality, singularity and flat tails in practical implementations; \cite{Duo2017} provides a weighted trapezoidal rule for the fractional Laplacian in the singular integral form and gives the additional insights into the convergence behaviour of the method by the extensive numerical examples. For the tempered fractional Laplacian (\ref{idef1}), the existing numerical methods at present are mainly analyzed in one dimension. Among them, \cite{ZhangDeng2017} presents a Riesz basis Galerkin method for the tempered fractional Laplacian and gives the well-posedness proof of the Galerkin weak formulation and convergence analysis; \cite{Zhang2017} proposes a finite difference scheme and proves that the accuracy depends  on the regularity of the exact solution on $\bar{\Omega}$ rather than the regularity on the whole line. So far, its seems that there are no numerical analysis and implementation discussion on  (\ref{idef1}) in two dimension.

%To the best of our knowledge, analysis and implementation of the tempered fractional Laplacian (\ref{idef1}) in two dimension by the finite difference method is rare.

In this paper, we derive a finite difference scheme for the tempered fractional Laplacian (\ref{idef1}) in two dimension, based on the weighted trapezoidal rule combined with the bilinear interpolation. To be specific, we first write (\ref{idef1}) as the weighted integral of a weak singular function by introducing the function $\phi_{\gamma}$ and transforming the integration over the whole plane to the one in the first quadrant by symmetry; then we approximate the integration by the weighted trapezoidal rule in the neighborhood of any fixed point $(x,y)$ and by bilinear interpolation for the rest of the computational domain $\Omega$. It's worth mentioning that the present method also works well for the two dimensional fractional Laplacian (\ref{idef3}). Furthermore, we apply the discretization to solve the two dimensional tempered fractional Poisson equation with Dirichlet boundary conditions \cite{Deng:17}

\begin{equation}\label{defequ1}
\left\{
\begin{split}
  -(\Delta+\lambda)^{\frac{\beta}{2}}u(\mathbf{x})&=f(\mathbf{x}) & {\rm for}~\mathbf{x}\in\Omega \\
  u(\mathbf{x})&=0 & {\rm for}~\mathbf{x}\in\mathbb{R}^2\backslash \Omega.
 \end{split}
 \right.
 \end{equation}
The accuracy of the scheme is proved to be $O(h^{2-\beta})$ for $u\in C^2(\mathbb{R}^2)$.

%We prove that the accuracy of the method is $O(h^{2-\beta})$ for $u\in C^2(\mathbb{R}^2)$, according to the truncation error of the bilinear interpolation.

   As is well-known, it generally gives rise to a full matrix when discretizing the non-local operator. Therefore, the design of efficient iteration scheme makes more sense. When discretizing two dimensional tempered fractional Laplacian, we get a symmetric block Toeplitz matrix with Toeplitz block. Here, we use  the structure of the matrix to design the solver algorithm to (\ref{defequ1}). That is, we use Conjugate Gradient iterator to solve  (\ref{defequ1}); and in iteration process, we calculate the $B\mathbf{U}$ ($B$ is a symmetric block Toeplitz matrix with Toeplitz block and $\mathbf{U}$ is a vector) by fast Fourier transform \cite{Chen2005} to reduce the computational complexity. This algorithm has a memory requirement of $O(N^2)$ and a computational cost of $O(N^2 \log N^2)$ instead of a memory requirement of $O(N^4)$ and a computational cost of $O(N^6)$ per iteration. Next, to verify the convergence rates of the presented scheme, numerical experiments are performed for the equation with exact solution.
   %   on known exact solution validate the predicted convergence rates for $u\in C^2(\mathbb{R}^2)$,
%      and we also confirm the effectiveness for the fractional Laplacian of our method.
   For the unknown source term, we give an algorithm to approximate it, which changes the unbounded integration domain into bounded one through polar coordinate transformation in some special cases. For the details, see Appendix A. And we state the key points for the code implementation in Appendix B.

The paper is organized as follows. In Section 2, we propose a discretization scheme for the tempered fractional Laplacian through the weighted trapezoidal rule combined with the bilinear interpolation, and give its truncation error. In Section 3, we solve the tempered fractional Poisson equation with Dirichlet boundary conditions by the presented scheme and provide the error estimates. In the last Section, through numerical experiments for the equation with/without known solution, we verify the convergence rates and show the effectiveness of the schemes.

% we verify the convergence rates of our method by numerical examples and discuss the influence of parameters on the convergence orders.

\section{Numerical discretization of the tempered fractional Laplacian and its truncation error}
This section provides the discretization of the two dimensional tempered fractional Laplacian by the weighted trapezoidal rule combined with the bilinear interpolation on a bounded domain $\Omega=(-l,l)\times(-l,l)$ with extended homogeneous Dirichlet boundary conditions: $u(x,y)\equiv 0$ for $(x,y)\in\Omega^c$. Afterwards, we analyze the truncation error of the discretization.

Let us introduce the inner product and norms that will be used in the paper. Define the discrete $L_2$ inner product and $L_2$ norm as
\begin{equation}
\begin{split}
  &(\mathbf{V},\mathbf{W})=h\sum_{i=1}^{M}v_iw_i,\\
  &\|\mathbf{V}\|=\sqrt{(\mathbf{V},\mathbf{V})};
\end{split}
\end{equation}
denote
\begin{equation}
\begin{split}
 &\|v\|_{L_\infty(\Omega)}=\sup_{x\in \Omega}|v(x)|, \\
 &\|\mathbf{V}\|_\infty=\max_{1\leq i\leq M}|v_i|,
\end{split}
\end{equation}
as the continuous and discrete $L_\infty$ norm, where $\mathbf{V}, \mathbf{W} \in \mathbb{R}^M$.
\subsection{Numerical scheme}
According to (\ref{idef1}), the definition of the tempered fractional Laplacian in two dimension is \begin{equation}\label{def2}
\begin{split}
-(\Delta+\lambda)^{\frac{\beta}{2}}u(x,y)=-c_{2,\beta,\lambda}{\rm P.V.}\int\int_{\mathbb{R}^2}\frac{u(x+\xi,y+\eta)-u(x,y)}{e^{\lambda\sqrt{\xi^2+\eta^2}}\left(\sqrt{\xi^2+\eta^2}\right)^{{2+\beta}}}d\xi d\eta,
\end{split}
\end{equation}
%We can symmetrize (\ref{def2}) to obtain
which can be symmetrized as
\begin{equation}\label{nonintegral}
\begin{split}
&-(\Delta+\lambda)^{\frac{\beta}{2}}u(x,y)\\
=&-\frac{c_{2,\beta,\lambda}}{2}\int\int_{\mathbb{R}^2}\frac{u(x+\xi,y+\eta)-2u(x,y)+u(x-\xi,y-\eta)}{e^{\lambda\sqrt{\xi^2+\eta^2}}\left(\sqrt{\xi^2+\eta^2}\right)^{{2+\beta}}}d\xi d\eta\\
=&-\frac{c_{2,\beta,\lambda}}{4}\int\int_{\mathbb{R}^2}\frac{g(x,y,\xi,\eta)}{e^{\lambda\sqrt{\xi^2+\eta^2}}\left(\sqrt{\xi^2+\eta^2}\right)^{{2+\beta}}}d\xi d\eta
\end{split}
\end{equation}
with
\begin{equation}
g(x,y,\xi,\eta)=u(x+\xi,y+\eta)+u(x-\xi,y+\eta)+u(x-\xi,y-\eta)+u(x+\xi,y-\eta)-4u(x,y).
\end{equation}
 By the symmetry of the integral domain and integrand, Eq. (\ref{nonintegral}) can be rewritten as
\begin{equation}
\begin{split}\label{decequa1}
&-(\Delta+\lambda)^{\frac{\beta}{2}}u(x,y)
=-c_{2,\beta,\lambda}\int_{0}^{\infty}\int_{0}^{\infty}\frac{g(x,y,\xi,\eta)}{e^{\lambda\sqrt{\xi^2+\eta^2}}\left(\sqrt{\xi^2+\eta^2}\right)^{{2+\beta}}}d\eta d\xi .\\
\end{split}
\end{equation}
If we denote
\begin{equation}
\phi_{\gamma}(\xi,\eta)=\frac{g(x,y,\xi,\eta)}{e^{\lambda\sqrt{\xi^2+\eta^2}}\left(\sqrt{\xi^2+\eta^2}\right)^{{\gamma}}},
\end{equation}
where $\gamma\in(\beta,2]$, then (\ref{decequa1}) becomes
\begin{equation}
\begin{split}\label{decequa2}
&-(\Delta+\lambda)^{\frac{\beta}{2}}u(x,y)
=-c_{2,\beta,\lambda}\int_{0}^{\infty}\int_{0}^{\infty}\frac{\phi_{\gamma}(\xi,\eta)}{\left(\sqrt{\xi^2+\eta^2}\right)^{-\gamma+2+\beta}} d\eta d\xi.
\end{split}
\end{equation}
Now, we just need to discretize the tempered fractional Laplacian in $[0,\infty)\times[0,\infty)$ instead of $\mathbb{R}\times\mathbb{R}$. Taking a constant $L=2l$, we have $u(x+\xi,y+\eta)=0$ for $(\xi,\eta)\notin (-L,L)\times(-L,L)$. Thus,
\begin{equation}\label{equdis28}
\begin{split}
-(\Delta+\lambda)^{\frac{\beta}{2}}u(x,y)=-c_{2,\beta,\lambda}&\Big(\int_{0}^{L}\int_{0}^{L}\frac{\phi_{\gamma}(\xi,\eta)}{\left(\sqrt{\xi^2+\eta^2}\right)^{-\gamma+2+\beta}} d\eta d\xi\\
&-4\int_{0}^{L}\int_{L}^{\infty}\frac{u(x,y)}{e^{\lambda\sqrt{\xi^2+\eta^2}}\left(\sqrt{\xi^2+\eta^2}\right)^{{2+\beta}}} d\eta d\xi\\
&-4\int_{L}^{\infty}\int_{0}^{L}\frac{u(x,y)}{e^{\lambda\sqrt{\xi^2+\eta^2}}\left(\sqrt{\xi^2+\eta^2}\right)^{{2+\beta}}} d\eta d\xi\\
&-4\int_{L}^{\infty}\int_{L}^{\infty}\frac{u(x,y)}{e^{\lambda\sqrt{\xi^2+\eta^2}}\left(\sqrt{\xi^2+\eta^2}\right)^{{2+\beta}}}d\eta d\xi \Big).
\end{split}
\end{equation}
For convenience, we denote
\begin{equation}
  \begin{split}
    G^{\infty}=&\int_{0}^{L}\int_{L}^{\infty}\frac{1}{e^{\lambda\sqrt{\xi^2+\eta^2}}\left(\sqrt{\xi^2+\eta^2}\right)^{{2+\beta}}}d\eta d\xi\\
&+\int_{L}^{\infty}\int_{0}^{L}\frac{1}{e^{\lambda\sqrt{\xi^2+\eta^2}}\left(\sqrt{\xi^2+\eta^2}\right)^{{2+\beta}}} d\eta d\xi\\
&+\int_{L}^{\infty}\int_{L}^{\infty}\frac{1}{e^{\lambda\sqrt{\xi^2+\eta^2}}\left(\sqrt{\xi^2+\eta^2}\right)^{{2+\beta}}}d\eta d\xi.\\
  \end{split}
\end{equation}

Let the mesh size $h_1=L/N_{i},h_2=L/N_{j}$; denote grid points $\xi_i=ih_1, \eta_j=jh_2$, for $1\leq i \leq N_{i}, 1\leq j \leq N_{j}$; for convenience, we set $N_i=N_j$. Then, we can formulate the first integral in (\ref{equdis28}) as
\begin{equation}\label{equtodis}
\int_{0}^{L}\int_{0}^{L}\phi_{\gamma}(\xi,\eta)(\xi^2+\eta^2)^{\frac{\gamma-2-\beta}{2}} d\eta d\xi =\sum_{i=0}^{N_i-1}\sum_{j=0}^{N_j-1}\int_{\xi_{i}}^{\xi_{i+1}}\int_{\eta_{j}}^{\eta_{j+1}}\phi_{\gamma}(\xi,\eta)(\xi^2+\eta^2)^{\frac{\gamma-2-\beta}{2}}d\eta d\xi.
\end{equation}

 For (\ref{equtodis}), when $(i,j)=(0,0)$, it is easy to see that the integration is weak singular. So we approximate the integral by the weighted trapezoidal rule. For different $\gamma$, we use different integral nodes to approximate it, namely,
\begin{equation}\label{equdis11}
\begin{split}
\int_{\xi_{0}}^{\xi_1}\int_{\eta_0}^{\eta_1}&\phi_{\gamma}(\xi,\eta)(\xi^2+\eta^2)^{\frac{\gamma-2-\beta}{2}} d\eta d\xi=\\&\left\{
    \begin{split}
      &\frac{1}{4}\left(\lim_{(\xi,\eta)\rightarrow(0,0)}\phi_{\gamma}(\xi,\eta)+\phi_{\gamma}(\xi_0,\eta_1)+\phi_{\gamma}(\xi_1,\eta_1)+\phi_{\gamma}(\xi_1,\eta_0)\right)G_{0,0},~~\gamma\in(\beta,2);\\
      &\frac{1}{3}\left(\phi_{\gamma}(\xi_0,\eta_1)+\phi_{\gamma}(\xi_1,\eta_1)+\phi_{\gamma}(\xi_1,\eta_0)\right)G_{0,0},~~~~\gamma=2,
    \end{split}
    \right.
    \end{split}
\end{equation}
where
\begin{equation}\label{equdefG00}
G_{0,0}=\int_{\xi_{0}}^{\xi_{1}}\int_{\eta_{0}}^{\eta_{1}}(\xi^2+\eta^2)^{\frac{\gamma-2-\beta}{2}} d\eta d\xi.
\end{equation}
Assuming $u$ is smooth enough, for $\gamma \in(\beta,2)$, there exists
\begin{equation}
  \lim_{(\xi,\eta)\rightarrow(0,0)}\phi_{\gamma}(\xi,\eta)=0,
\end{equation}
so we introduce a parameter $k_\gamma$
\begin{equation}
  k_\gamma=\left\{
  \begin{split}
  1~~~~~~~~~~~~~~&\gamma\in(\beta,2);\\
  \frac{4}{3}~~~~~~~~~~~~~~&\gamma=2.
  \end{split}
  \right.
\end{equation}
Then, Eq. (\ref{equdis11}) can be rewritten as
\begin{equation}
  \int_{\xi_{0}}^{\xi_1}\int_{\eta_0}^{\eta_1}\phi_{\gamma}(\xi,\eta)(\xi^2+\eta^2)^{\frac{\gamma-2-\beta}{2}}d\eta d\xi=\frac{k_\gamma}{4}\left(\phi_{\gamma}(\xi_0,\eta_1)+\phi_{\gamma}(\xi_1,\eta_1)+\phi_{\gamma}(\xi_1,\eta_0)\right)G_{0,0}.
\end{equation}
For another part of (\ref{equtodis}), when $(i,j)\neq(0,0)$, we deal with the integration by the bilinear interpolation. Before discretizing it, we define the following functions
\begin{equation}\label{equdefG}
\begin{split}
G_{i,j}&=\frac{1}{h^2}\int_{\xi_{i}}^{\xi_{i+1}}\int_{\eta_{j}}^{\eta_{j+1}}(\xi^2+\eta^2)^{\frac{\gamma-2-\beta}{2}} d\eta d\xi,\\
G^\xi_{i,j}&=\frac{1}{h^2}\int_{\xi_{i}}^{\xi_{i+1}}\int_{\eta_{j}}^{\eta_{j+1}}\xi(\xi^2+\eta^2)^{\frac{\gamma-2-\beta}{2}} d\eta d\xi,\\
G^\eta_{i,j}&=\frac{1}{h^2}\int_{\xi_{i}}^{\xi_{i+1}}\int_{\eta_{j}}^{\eta_{j+1}}\eta(\xi^2+\eta^2)^{\frac{\gamma-2-\beta}{2}}d\eta d\xi, \\
G^{\xi\eta}_{i,j}&=\frac{1}{h^2}\int_{\xi_{i}}^{\xi_{i+1}}\int_{\eta_{j}}^{\eta_{j+1}}\xi\eta(\xi^2+\eta^2)^{\frac{\gamma-2-\beta}{2}} d\eta d\xi,
\end{split}
\end{equation}
where $G_{i,j}$, $G^\xi_{i,j}$, $G^\eta_{i,j}$, $G^{\xi\eta}_{i,j}$ can be obtained by numerical integration.

Further denote $I_{i,j}$ as the interpolation integration in $[\xi_i,\xi_{i+1}]\times [\eta_j,\eta_{j+1}]$, i.e.,
\begin{equation}
  \begin{split}
      I_{i,j}=&\phi_\gamma(\xi_i,\eta_j)(G^{\xi\eta}_{i,j}-\xi_{i+1}G^\eta_{i,j}-\eta_{j+1}G^\xi_{i,j}+\xi_{i+1}\eta_{j+1}G_{i,j})\\
                        &-\phi_\gamma(\xi_{i+1},\eta_j)(G^{\xi\eta}_{i,j}-\xi_{i}G^\eta_{i,j}-\eta_{j+1}G^\xi_{i,j}+\xi_{i}\eta_{j+1}G_{i,j})\\
                        &-\phi_\gamma(\xi_i,\eta_{j+1})(G^{\xi\eta}_{i,j}-\xi_{i+1}G^\eta_{i,j}-\eta_{j}G^\xi_{i,j}+\xi_{i+1}\eta_{j}G_{i,j})\\
                        &+\phi_\gamma(\xi_{i+1},\eta_{j+1})(G^{\xi\eta}_{i,j}-\xi_{i}G^\eta_{i,j}-\eta_{j}G^\xi_{i,j}+\xi_{i}\eta_{j}G_{i,j});
  \end{split}
\end{equation}
and let
\begin{equation}\label{equdefW}
  \begin{split}
  W^1_{i,j}&=G^{\xi\eta}_{i,j}-\xi_{i+1}G^\eta_{i,j}-\eta_{j+1}G^\xi_{i,j}+\xi_{i+1}\eta_{j+1}G_{i,j},\\
  W^2_{i,j}&=-\left(G^{\xi\eta}_{i-1,j}-\xi_{i-1}G^\eta_{i-1,j}-\eta_{j+1}G^\xi_{i-1,j}+\xi_{i-1}\eta_{j+1}G_{i-1,j}\right),\\
  W^3_{i,j}&=-\left(G^{\xi\eta}_{i,j-1}-\xi_{i+1}G^\eta_{i,j-1}-\eta_{j-1}G^\xi_{i,j-1}+\xi_{i+1}\eta_{j-1}G_{i,j-1}\right),\\
  W^4_{i,j}&=G^{\xi\eta}_{i-1,j-1}-\xi_{i-1}G^\eta_{i-1,j-1}-\eta_{j-1}G^\xi_{i-1,j-1}+\xi_{i-1}\eta_{j-1}G_{i-1,j-1}.
  \end{split}
\end{equation}
Then, $I_{i,j}$ can be rewritten as
\begin{equation}\label{equIij2}
  \begin{split}
      I_{i,j}=&\phi_\gamma(\xi_i,\eta_j)W^1_{i,j}+\phi_\gamma(\xi_{i+1},\eta_j)W^2_{i+1,j}+\phi_\gamma(\xi_i,\eta_{j+1})W^3_{i,j+1}
                        +\phi_\gamma(\xi_{i+1},\eta_{j+1})W^4_{i+1,j+1},
  \end{split}
\end{equation}
and Eq. (\ref{equtodis}) becomes
\begin{equation}\label{equdiswithI}
\begin{split}
&\sum_{i=0}^{N_i-1}\sum_{j=0}^{N_j-1}\int_{\xi_{i}}^{\xi_{i+1}}\int_{\eta_{j}}^{\eta_{j+1}}\phi_{\gamma}(\xi,\eta)(\xi^2+\eta^2)^{\frac{\gamma-2-\beta}{2}}d\eta d\xi \\
=&\frac{k_\gamma}{4}\left(\phi_{\gamma}(\xi_0,\eta_1)+\phi_{\gamma}(\xi_1,\eta_1)+\phi_{\gamma}(\xi_1,\eta_0)\right)G_{0,0}\\
&+\sum^{i=N_i-1,j=N_j-1}_{
\begin{subarray}{c}
i,j=0;\\(i,j)\neq(0,0)
\end{subarray}}I_{i,j}.
\end{split}
\end{equation}
Combining (\ref{equIij2}) with (\ref{equdiswithI}), we derive
\begin{equation}
  \begin{split}
    &\sum_{i=0}^{N_i-1}\sum_{j=0}^{N_j-1}\int_{\xi_{i}}^{\xi_{i+1}}\int_{\eta_j}^{\eta_{j+1}}\phi_{\gamma}(\xi,\eta)(\xi^2+\eta^2)^{\frac{\gamma-2-\beta}{2}} d\eta d\xi \\
    =&\left(\frac{k_\gamma}{4}G_{0,0}+W^1_{1,1}+W^2_{1,1}+W^3_{1,1}\right)\phi_{\gamma}(\xi_1,\eta_1)\\
    &+\left(\frac{k_\gamma}{4}G_{0,0}+W^1_{1,0}\right)\phi_{\gamma}(\xi_1,\eta_0)+\left(\frac{k_\gamma}{4}G_{0,0}+W^1_{0,1}\right)\phi_{\gamma}(\xi_0,\eta_1)\\
    &+\sum_{i=2}^{N_i-1}\left(W^1_{i,0}+W^2_{i,0}\right)\phi_{\gamma}(\xi_i,\eta_0)+\sum_{j=2}^{N_j-1}\left(W^1_{0,j}+W^3_{0,j}\right)\phi_{\gamma}(\xi_0,\eta_j)\\
    &+\sum_{i=1}^{N_i-1}\left(W^3_{i,N_j}+W^4_{i,N_j}\right)\phi_{\gamma}(\xi_i,\eta_{N_j})+\sum_{j=1}^{N_j-1}\left(W^2_{N_i,j}+W^4_{N_i,j}\right)\phi_{\gamma}(\xi_{N_i},\eta_j)\\
    &+W^3_{0,N_j}\phi_{\gamma}(\xi_0,\eta_{N_j})+W^2_{N_i,0}\phi_{\gamma}(\xi_{N_i},\eta_{0})+W^4_{N_i,N_j}\phi_{\gamma}(\xi_{N_i},\eta_{N_j})\\
    &+\sum^{i=N_i-1,j=N_j-1}_{
        \begin{subarray}{c}
            i,j=1;\\(i,j)\neq(1,1)
        \end{subarray}}\left(W^1_{i,j}+W^2_{i,j}+W^3_{i,j}+W^4_{i,j}\right)\phi_{\gamma}(\xi_i,\eta_j).
  \end{split}
\end{equation}
For the second part of (\ref{equdis28}), namely $G^\infty$, we get it by numerical integration.

Denote $u_{p,q}=u(-l+ph,-l+qh)$, $(p,q\in \mathbb{Z})$. Then we can get the discretization scheme
\begin{equation}\label{defdis}
  -(\Delta+\lambda)_h^{\beta/2}u_{p,q}=\sum_{i=-N_i}^{i=N_i}\sum_{j=-N_j}^{j=N_j}w_{|i|,|j|}u_{p-i,q-j},
\end{equation}
where
\begin{equation}\label{equweightoffl}\footnotesize
w_{i,j}=-c_{2,\beta,\lambda}\left\{
\begin{split}
&-4\left(\frac{\frac{k_\gamma}{4}G_{0,0}+W^1_{1,1}+W^2_{1,1}+W^3_{1,1}}{e^{\lambda\sqrt{\xi_1^2+\eta_1^2}}\left(\sqrt{\xi_1^2+\eta_1^2}\right)^\gamma}\right.&\\
&~~~~~~   +\frac{\frac{k_\gamma}{4}G_{0,0}+W^1_{1,0}}{e^{\lambda\sqrt{\xi_1^2+\eta_0^2}}\left(\sqrt{\xi_1^2+\eta_0^2}\right)^\gamma}+\frac{\frac{k_\gamma}{4}G_{0,0}+W^1_{0,1}}{e^{\lambda\sqrt{\xi_0^2+\eta_1^2}}\left(\sqrt{\xi_0^2+\eta_1^2}\right)^\gamma}&\\
&~~~~~~   +\sum_{i=2}^{N_i-1}\frac{W^1_{i,0}+W^2_{i,0}}{e^{\lambda\sqrt{\xi_i^2+\eta_0^2}}\left(\sqrt{\xi_i^2+\eta_0^2}\right)^\gamma}+\sum_{j=2}^{N_j-1}\frac{W^1_{0,j}+W^3_{0,j}}{e^{\lambda\sqrt{\xi_0^2+\eta_j^2}}\left(\sqrt{\xi_0^2+\eta_j^2}\right)^\gamma}&\\
&~~~~~~   +\sum_{i=2}^{N_i-1}\frac{W^3_{i,N_j}+W^4_{i,N_j}}{e^{\lambda\sqrt{\xi_i^2+\eta_{N_j}^2}}\left(\sqrt{\xi_i^2+\eta_{N_j}^2}\right)^\gamma}+\sum_{j=2}^{N_j-1}\frac{W^2_{N_i,j}+W^4_{N_i,j}}{e^{\lambda\sqrt{\xi_{N_i}^2+\eta_j^2}}\left(\sqrt{\xi_{N_i}^2+\eta_j^2}\right)^\gamma}&\\
&~~~~~~+\sum_{i=1,j=1,(i,j)\neq(1,1)}^{i=N_i-1,j=N_j-1}\frac{W^1_{i,j}+W^2_{i,j}+W^3_{i,j}+W^4_{i,j}}{e^{\lambda\sqrt{\xi_{i}^2+\eta_j^2}}\left(\sqrt{\xi_{i}^2+\eta_j^2}\right)^\gamma}&\\
&~~~~~~   +\frac{W^3_{0,N_j}}{e^{\lambda\sqrt{\xi_0^2+\eta_{N_j}^2}}\left(\sqrt{\xi_0^2
+\eta_{N_j}^2}\right)^\gamma}+\frac{W^2_{N_i,0}}{e^{\lambda\sqrt{\xi_{N_i}^2+\eta_0^2}}\left(\sqrt{\xi_{N_i}^2+\eta_0^2}\right)^\gamma}&\\
&~~~~~~\left.+\frac{W^4_{N_i,N_j}}{e^{\lambda\sqrt{\xi_{N_i}^2+\eta_{N_j}^2}}\left(\sqrt{\xi_{N_i}^2+\eta_{N_j}^2}\right)^\gamma}+G^\infty\right),&i=0,j=0\\
&\frac{\frac{k_\gamma}{4}G_{0,0}+W^1_{1,1}+W^2_{1,1}+W^3_{1,1}}{e^{\lambda\sqrt{\xi_1^2+\eta_1^2}}\left(\sqrt{\xi_1^2+\eta_1^2}\right)^\gamma},&i=1,j=1\\
&2\frac{\frac{k_\gamma}{4}G_{0,0}+W^1_{1,0}}{e^{\lambda\sqrt{\xi_1^2+\eta_0^2}}\left(\sqrt{\xi_1^2+\eta_0^2}\right)^\gamma},&i=1,j=0\\
&2\frac{\frac{k_\gamma}{4}G_{0,0}+W^1_{0,1}}{e^{\lambda\sqrt{\xi_0^2+\eta_1^2}}\left(\sqrt{\xi_0^2+\eta_1^2}\right)^\gamma},&i=0,j=1\\
&2\frac{W^1_{i,0}+W^2_{i,0}}{e^{\lambda\sqrt{\xi_i^2+\eta_0^2}}\left(\sqrt{\xi_i^2+\eta_0^2}\right)^\gamma,}&1<i<N_i,j=0\\
&2\frac{W^1_{0,j}+W^3_{0,j}}{e^{\lambda\sqrt{\xi_0^2+\eta_j^2}}\left(\sqrt{\xi_0^2+\eta_j^2}\right)^\gamma},&i=0, 1<j<N_j\\
&\frac{W^3_{i,N_j}+W^4_{i,N_j}}{e^{\lambda\sqrt{\xi_i^2+\eta_{N_j}^2}}\left(\sqrt{\xi_i^2+\eta_{N_j}^2}\right)^\gamma},&1<i<N_i,j=N_j\\
&\frac{W^2_{N_i,j}+W^4_{N_i,j}}{e^{\lambda\sqrt{\xi_{N_i}^2+\eta_j^2}}\left(\sqrt{\xi_{N_i}^2+\eta_j^2}\right)^\gamma},&i=N_i, 1<j<N_j\\
&2\frac{W^3_{0,N_j}}{e^{\lambda\sqrt{\xi_{0}^2+\eta_{N_j}^2}}\left(\sqrt{\xi_{0}^2+\eta_{N_j}^2}\right)^\gamma},& i=0,j=N_j\\
&2\frac{W^2_{N_i,0}}{e^{\lambda\sqrt{\xi_{N_i}^2+\eta_0^2}}\left(\sqrt{\xi_{N_i}^2+\eta_0^2}\right)^\gamma},& i=N_i,j=0\\
&\frac{W^4_{N_i,N_j}}{e^{\lambda\sqrt{\xi_{N_i}^2+\eta_{N_j}^2}}\left(\sqrt{\xi_{N_i}^2+\eta_{N_j}^2}\right)^\gamma},&i=N_i,j=N_j\\
&\frac{W^1_{i,j}+W^2_{i,j}+W^3_{i,j}+W^4_{i,j}}{e^{\lambda\sqrt{\xi_{i}^2+\eta_j^2}}\left(\sqrt{\xi_{i}^2+\eta_j^2}\right)^\gamma},&otherwise
\end{split}
\right.
\end{equation}

For the sake of convenience, we write the matrix form of the scheme (\ref{defdis}) as
\begin{equation}\label{desdif2}
-(\Delta+\lambda)_{h}^{\frac{\beta}{2}}\mathbf{U}=B\mathbf{U},
\end{equation}
 where
 \begin{equation}
  \mathbf{U}=\left(u_{1,1},u_{1,2},\cdots,u_{1,N_j-1},u_{2,1}\cdots,u_{2,N_j-1},\cdots,u_{N_i-1,N_j-1}\right)^{T},
  \end{equation}
  and
   %$B$ is the matrix representation of the tempered fractional Laplacian,
 \begin{equation}
 B=\left[\begin{matrix}
 w_{|1-1|,|1-1|}& w_{|1-1|,|2-1|} &\cdots & w_{|(N_i-1)-1|,|(N_j-1)-1|} \\
 w_{|1-1|,|1-2|}& w_{|1-1|,|2-2|} &\cdots & w_{|(N_i-1)-1|,|(N_j-1)-2|} \\
 \vdots& \vdots &\ddots & \vdots \\
 w_{|1-(N_i-1)|,|1-(N_j-1)|}& w_{|1-(N_i-1)|,|2-(N_j-1)|} &\cdots & w_{|(N_i-1)-(N_i-1)|,|(N_j-1)-(N_j-1)|} \\
 \end{matrix}\right],
 \end{equation}
 is the matrix representation of the tempered fractional Laplacian.

 Denote the numerical solution of Eq. (\ref{defequ1}) at $(-l+ph,-l+qh)$ as $u^h_{p,q}$ and the source term $F$ at $(-l+ph,-l+qh)$ as $f_{p,q}$, $(p,q\in \mathbb{Z})$. Then Eq. (\ref{defequ1}) can also be written as
 \begin{equation}\label{matex1}
 B\mathbf{U}_h=F,
 \end{equation}
 where
  \begin{equation}
  \mathbf{U}_h=\left(u^h_{1,1},u^h_{1,2},\cdots,u^h_{1,N_j-1},u^h_{2,1}\cdots,u^h_{2,N_j-1},\cdots,u^h_{N_i-1,N_j-1}\right)^{T},
  \end{equation}
 and
 \begin{equation}
   F=\left(f_{1,1},f_{1,2},\cdots,f_{1,N_j-1},f_{2,1}\cdots,f_{2,N_j-1},\cdots,f_{N_i-1,N_j-1}\right)^{T}.
 \end{equation}
\subsection{Structure of the stiffness matrix $B$}
\begin{definition}\cite{Chen2005}
 The symmetric $N\times N$ matrix $T$ is called the symmetric  Toeplitz matrix if its entries are constant along each diagonal, i.e.,
 \begin{equation}
   T=\left[
   \begin{matrix}
     t_0&t_1&\cdots&t_{N-2}&t_{N-1}\\
     t_1&t_0&\cdots&~t_{N-3}&t_{N-2}\\
     \vdots& \vdots &\ddots & \vdots& \vdots \\
     t_{N-1}&t_{N-2}&\cdots&~t_1&t_0\\
   \end{matrix}\right].
 \end{equation}
 And the symmetric $N^2\times N^2$ matrix $H$ is called the symmetric block Toeplitz matrix with Toeplitz block, which has following structure
  \begin{equation}
   H=\left[
   \begin{matrix}
     T_0&T_1&\cdots&T_{n-2}&T_{n-1}\\
     T_1&T_0&\cdots&~T_{n-3}&T_{n-2}\\
     \vdots& \vdots &\ddots & \vdots& \vdots \\
     T_{n-1}&T_{n-2}&\cdots&~T_1&T_0\\
   \end{matrix}\right],
 \end{equation}
 where each $T_i$ is a symmetric Toeplitz matrix.
  \end{definition}

  Since a symmetric Toeplitz matrix $T$ is determined by its first column and each block of $H$ is symmetric Toeplitz matrix, we can store $H$ by a $N\times N$ matrix to reduce the memory requirement \cite{Chen2005}.
In our scheme (\ref{desdif2}), it is easy to verify that the matrix $B$ is a symmetric block Toeplitz matrix with Toeplitz block according to (\ref{equweightoffl}), so we store $B$ by a $N\times N$ matrix to reduce the memory requirement to $O(N^2)$. When solving $B\mathbf{U}_h=F$, the fast Fourier transform can be used in the iteration process and the computational cost of calculating $B\mathbf{U}$ ($\mathbf{U}\in R^{N^2}$ is a vector) can be reduced to $O(N^2 \log N^2)$.

\subsection{Truncation error}
\begin{lemma}\label{lemfunc2error}
Let $\beta\in(0,2)$ , $\xi> 0$ and $\eta>0$. If $u(x,y)\in C^{2}(\mathbb{R}^2)$, the derivative $D^{\alpha}\phi_{\gamma}$ ($\alpha$ is multi-index and $|\alpha|\leq2$) exists for any $\gamma\in (\beta,2]$, then for $(x,y)\in\Omega$, there are
\begin{equation}\label{eqphikz}
\begin{split}
&\left|\phi_{\gamma}\right|\leq C\left(\xi^2+\eta^2\right)^{1-\frac{\gamma}{2}},\\
&\left|\frac{\partial^2\phi_{\gamma}}{\partial \xi^2}\right|\leq C\left((\xi^2+\eta^2)^{-\frac{\gamma}{2}}+(\xi^2+\eta^2)^{\frac{1}{2}-\frac{\gamma}{2}}+(\xi^2+\eta^2)^{1-\frac{\gamma}{2}}\right),\\
&\left|\frac{\partial^2\phi_{\gamma}}{\partial \eta^2}\right|\leq C\left((\xi^2+\eta^2)^{-\frac{\gamma}{2}}+(\xi^2+\eta^2)^{\frac{1}{2}-\frac{\gamma}{2}}+(\xi^2+\eta^2)^{1-\frac{\gamma}{2}}\right)
\end{split}
\end{equation}
with $C$ being a positive constants.
\end{lemma}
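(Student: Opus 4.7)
The plan is to estimate $g(x,y,\xi,\eta)$ and its first two $\xi$-derivatives separately from the scalar factor $h(r):=e^{-\lambda r}r^{-\gamma}$ (where $r=\sqrt{\xi^2+\eta^2}$), and then combine the two by the Leibniz rule. By the symmetry $\phi_\gamma(\xi,\eta)\leftrightarrow\phi_\gamma(\eta,\xi)$ under relabeling of $\xi$ and $\eta$, the bound on $\partial_\eta^2\phi_\gamma$ will follow once the bound on $\partial_\xi^2\phi_\gamma$ is established, so I would focus on the latter.

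First I would Taylor expand each of the four shifted copies of $u$ that make up $g$ around $(x,y)$ to second order. The reflection symmetries $(\xi,\eta)\mapsto(-\xi,\eta)$ and $(\xi,\eta)\mapsto(\xi,-\eta)$ built into $g$ annihilate every odd-order contribution, in particular the first derivatives and the mixed $\xi\eta u_{xy}$ term, so the leading part is proportional to $\xi^2u_{xx}+\eta^2u_{yy}$. Since $u\in C^2(\mathbb{R}^2)$ (and vanishes outside the bounded $\Omega$, so the second derivatives are bounded on the relevant compact set), this yields $|g|\le Cr^2$. Combined with the trivial bound $|h(r)|\le r^{-\gamma}$ (using $e^{-\lambda r}\le 1$), this immediately gives the first estimate $|\phi_\gamma|\le Cr^{2-\gamma}=C(\xi^2+\eta^2)^{1-\gamma/2}$.

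Next I would differentiate $g$ in $\xi$ directly: $\partial_\xi g$ is a sum of two finite differences of $u_x$ in the $x$-direction, so the mean value theorem gives $|\partial_\xi g|\le C\xi\le Cr$; and $\partial_\xi^2 g$ is the sum of $u_{xx}$ at the four shifted points, hence $|\partial_\xi^2 g|\le C$. For $h$ viewed as a function of $\xi$ through $r$, elementary calculation with $\partial_\xi r=\xi/r$ gives the univariate bounds $|h'(r)|\le C(r^{-\gamma-1}+r^{-\gamma})$ and $|h''(r)|\le C(r^{-\gamma-2}+r^{-\gamma-1}+r^{-\gamma})$, together with
\begin{equation*}
\partial_\xi^2 h \;=\; h''(r)\,\frac{\xi^2}{r^2} \;+\; h'(r)\,\frac{\eta^2}{r^3}.
\end{equation*}
Applying Leibniz to $\phi_\gamma=g\cdot h$ then decomposes $\partial_\xi^2\phi_\gamma$ into the three pieces $(\partial_\xi^2 g)\,h + 2(\partial_\xi g)(\partial_\xi h) + g\,(\partial_\xi^2 h)$, and inserting the bounds just collected together with $|\xi|/r,\,|\eta|/r\le 1$ at the appropriate places shows that each piece is controlled by a combination of $r^{-\gamma}$, $r^{1-\gamma}$, and $r^{2-\gamma}$, exactly as claimed.

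The main obstacle is purely organizational. After Leibniz, one has several summands (three from the product rule, with the last further split by the chain rule on $h(r)$) whose sizes are combinations of powers of $r$ with both positive and negative exponents. The care needed is to apply $|\xi|\le r$, $|\eta|\le r$, and in particular $\eta^2/r^3\le 1/r$, at the right spots so that no $r^{-\gamma-1}$ or worse singularity survives; the vanishing of the odd-order Taylor contributions to $g$ is precisely what supplies enough positive powers of $r$ to damp these potential singularities down to the stated $r^{-\gamma}$. Once this bookkeeping is performed, no delicate analysis remains.
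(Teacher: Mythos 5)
Your proposal is correct and follows essentially the same route as the paper: a second-order Taylor expansion of $g$ (using the even symmetry to kill the first-order terms) gives $|g|\le C(\xi^2+\eta^2)$, $|\partial_\xi g|\le C|\xi|$, $|\partial_\xi^2 g|\le C$, and then the product/chain rule applied to $g\cdot e^{-\lambda r}r^{-\gamma}$ yields exactly the eight-term expansion the paper writes out explicitly, each term bounded by $C\left(r^{-\gamma}+r^{1-\gamma}+r^{2-\gamma}\right)$. Your bookkeeping via $h(r)=e^{-\lambda r}r^{-\gamma}$ and its univariate derivatives is just a tidier organization of the same computation, so no substantive difference remains.
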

\begin{proof}
  Using Taylor's formula, we obtain
  \begin{equation}
    \begin{split}
      &\left|\phi_{\gamma}(\xi,\eta)\right|\leq\left|\frac{g(x,y,\xi,\eta)}{(\xi^2+\eta^2)^\frac{\gamma}{2}}\right|\\
      \leq &\left|\frac{(\xi\frac{\partial}{\partial x}+\eta\frac{\partial}{\partial y})^2u\left|_{(x^*_1,y^*_1)}\right.+(-\xi\frac{\partial}{\partial x}+\eta\frac{\partial}{\partial y})^2u\left|_{(x^*_2,y^*_2)}\right.}{2!(\xi^2+\eta^2)^\frac{\gamma}{2}}\right.\\
      &+\left.\frac{(\xi\frac{\partial}{\partial x}-\eta\frac{\partial}{\partial y})^2u\left|_{(x^*_3,y^*_3)}\right.+(-\xi\frac{\partial}{\partial x}-\eta\frac{\partial}{\partial y})^2u\left|_{(x^*_4,y^*_4)}\right.}{2!(\xi^2+\eta^2)^\frac{\gamma}{2}}\right|\\
      \leq& C\left|\frac{\xi^2+\eta^2}{(\xi^2+\eta^2)^\frac{\gamma}{2}}\right|\\
      \leq&C(\xi^2+\eta^2)^{1-\frac{\gamma}{2}},
    \end{split}
  \end{equation}
  where
  \begin{equation}
    \begin{split}
      (x^*_1,y^*_1)&\in[x,x+\xi]\times[y,y+\eta],\\
      (x^*_2,y^*_2)&\in[x-\xi,x]\times[y,y+\eta],\\
      (x^*_3,y^*_3)&\in[x,x+\xi]\times[y-\eta,y],\\
      (x^*_4,y^*_4)&\in[x-\xi,x]\times[y-\eta,y].\\
    \end{split}
  \end{equation}
  For $\left|\frac{\partial^2\phi_{\gamma}}{\partial \xi^2}\right|$, we have
  \begin{equation}
    \begin{split}
      \left|\frac{\partial^2\phi_{\gamma}}{\partial \xi^2}\right|&\leq\left|\frac{ g^{(2,0)}(x,y,\xi,\eta)}{(\xi^2+\eta^2)^\frac{\gamma}{2}}\right|
      +C\left|\frac{g^{(1,0)}(x, y,\xi,\eta)}{(\xi^2+\eta^2)^{1+\frac{\gamma}{2}}}\xi\right|\\
      &+C\left|\frac{g^{(1,0)}(x, y,\xi,\eta)}{(\xi^2+\eta^2)^{\frac{1}{2}+\frac{\gamma}{2}}}\xi\right|
      +C\left|\frac{g(x,y,\xi,\eta)}{(\xi^2+\eta^2)^{1+\frac{\gamma}{2}}}\right|\\
      &+C\left|\frac{g(x,y,\xi,\eta)}{(\xi^2+\eta^2)^{\frac{1}{2}+\frac{\gamma}{2}}}\right|
      +C\left|\frac{g(x,y,\xi,\eta)}{(\xi^2+\eta^2)^{2+\frac{\gamma}{2}}}\xi^2\right|\\
      &+C\left|\frac{g(x,y,\xi,\eta)}{(\xi^2+\eta^2)^{\frac{3}{2}+\frac{\gamma}{2}}}\xi^2\right|
      +C\left|\frac{g(x,y,\xi,\eta)}{(\xi^2+\eta^2)^{1+\frac{\gamma}{2}}}\xi^2\right|.
    \end{split}
  \end{equation}
  Using Taylor's formula again leads to
  \begin{equation}\label{phi}
    \begin{split}
     \left|\frac{\partial^2\phi_{\gamma}}{\partial \xi^2}\right|\leq C\left((\xi^2+\eta^2)^{-\frac{\gamma}{2}}+(\xi^2+\eta^2)^{\frac{1}{2}-\frac{\gamma}{2}}+(\xi^2+\eta^2)^{1-\frac{\gamma}{2}}\right).
    \end{split}
  \end{equation}
  The estimate for $\left|\frac{\partial^2\phi_{\gamma}}{\partial \eta^2}\right|$ can be similarly obtained as the one for $\left|\frac{\partial^2\phi_{\gamma}}{\partial \xi^2}\right|$. Then the desired inequalities (\ref{eqphikz}) hold.
\end{proof}

Next, we introduce a lemma about the error of the bilinear interpolation.
\begin{lemma}\label{lemmaintepola}\cite{Mobner2009}
Let $I_h$ denote the bilinear interpolant on the box $K=[0,h]\times[0,h]$. For $f\in W^{2,\infty}(K)$ ($W^{k,p}(K)$ denotes a sobolev space), the error of bilinear interpolant is bounded by
\begin{equation}
  \|f-If\|_{L_\infty}\leq ch^2\left(\left\|\frac{\partial^2 f}{\partial x^2}\right\|_{L_\infty}+\left\|\frac{\partial^2 f}{\partial y^2}\right\|_{L_\infty}\right).
\end{equation}
\end{lemma}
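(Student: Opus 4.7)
The plan is to reduce the two-dimensional bilinear interpolation error to the standard one-dimensional linear interpolation error by exploiting the tensor-product structure of $I_h$. Specifically, if $L_x$ denotes the one-dimensional linear interpolation operator in $x$ at the nodes $\{0,h\}$ (with $y$ treated as a parameter) and $L_y$ is defined analogously, then $I_h f = L_y L_x f = L_x L_y f$, because a bilinear polynomial on $K$ is uniquely determined by its four corner values. I would then split the error as
\[
f - I_h f = (f - L_x f) + (L_x f - L_y L_x f),
\]
and bound each piece separately using the classical 1D linear interpolation estimate.

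For the first term, at any fixed $y\in[0,h]$, the function $x\mapsto f(x,y)-L_x f(x,y)$ is the error of one-dimensional linear interpolation of $f(\cdot,y)$ on $[0,h]$. The standard estimate gives
\[
|f(x,y)-L_x f(x,y)|\le \tfrac{h^2}{8}\,\|\partial_x^2 f(\cdot,y)\|_{L^\infty([0,h])}\le \tfrac{h^2}{8}\,\|\partial_x^2 f\|_{L^\infty(K)}.
\]
For the second term, at any fixed $x\in[0,h]$, set $\phi(y):=L_x f(x,y)=(1-x/h)f(0,y)+(x/h)f(h,y)$. Then $\phi''(y)=(1-x/h)\partial_y^2 f(0,y)+(x/h)\partial_y^2 f(h,y)$, so $\|\phi''\|_{L^\infty([0,h])}\le \|\partial_y^2 f\|_{L^\infty(K)}$. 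Applying the 1D linear interpolation estimate to $\phi$ yields
\[
|L_x f(x,y)-L_y L_x f(x,y)|\le \tfrac{h^2}{8}\,\|\partial_y^2 f\|_{L^\infty(K)}.
\]

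Adding the two bounds gives the desired inequality with $c=1/8$. No real obstacle arises here: the only thing to verify carefully is the commutation $I_h=L_y L_x$ and the observation that $L_x$ acting on the $x$-variable preserves the $L^\infty$ bound on $\partial_y^2$ (since $L_x$ is a convex combination with non-negative weights summing to one for $(x,y)\in K$). This short tensor-product argument gives a self-contained proof of the cited inequality without invoking the Bramble--Hilbert lemma, although the Bramble--Hilbert approach on the reference square $[0,1]^2$ followed by a scaling argument would be an equally valid alternative.
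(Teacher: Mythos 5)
Your proof is correct, but it is genuinely different from what the paper does: the paper gives no argument at all, simply deferring to the tensor-product polynomial approximation theory in Brenner--Scott (i.e.\ Bramble--Hilbert-type machinery on the reference square) and omitting the details, whereas you give a short self-contained proof. Your route---writing $I_h=L_yL_x$ and splitting $f-I_hf=(f-L_xf)+(L_xf-L_yL_xf)$, then applying the classical one-dimensional estimate $\|g-Lg\|_{L_\infty}\le \tfrac{h^2}{8}\|g''\|_{L_\infty}$ twice---buys two things the citation does not: an explicit constant $c=1/8$, and a transparent explanation of why the bound involves only the pure derivatives $\partial_x^2 f$ and $\partial_y^2 f$ with no mixed term $\partial_x\partial_y f$ (a generic Bramble--Hilbert argument would produce the full $W^{2,\infty}$ seminorm, which is a weaker statement and is precisely why the paper cites the sharper tensor-product result of M\"o{\ss}ner--Reif). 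The key structural facts you rely on, $I_h=L_yL_x$ and the preservation of the $\partial_y^2$ bound under $L_x$ because $L_x$ is a convex combination, are both verified correctly. The only point worth a sentence in a fully rigorous write-up is the Sobolev technicality: for $f\in W^{2,\infty}(K)$ the restrictions $f(\cdot,y)$ and $f(0,\cdot),f(h,\cdot)$ must be shown to lie in $W^{2,\infty}$ of the interval with the inherited bounds; this is standard since $W^{2,\infty}(K)$ embeds in $C^{1,1}(\overline K)$, so line restrictions are $C^{1,1}$ with Lipschitz constants of the first derivatives controlled by $\|\partial_x^2 f\|_{L_\infty}$, $\|\partial_y^2 f\|_{L_\infty}$, and the $1/8$-estimate holds in that class. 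This is a routine remark, not a gap.
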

\begin{proof}
The proof can be completed by using the tensor-product polynomial approximation given in \cite{Brenner2008}. We omit the details here.
%can be got from \cite{Brenner2008} about the tensor-product polynomial approximation, we omit the proof for simplicity here.
\end{proof}

\begin{theorem}\label{thmtrunct}
Denote $(\Delta+\lambda)^{\frac{\beta}{2}}_{h}$ as a finite difference approximation of the tempered fractional Laplacian $(\Delta+\lambda)^{\frac{\beta}{2}}$.
Suppose that $u(x,y)\in C^{2}(\mathbb{R}^2)$ has finite support on an open set $\Omega\subset\mathbb{R}^2$. Then, for any $\gamma\in(\beta,2]$, there is
\begin{equation}
\left\|(\Delta+\lambda)^{\frac{\beta}{2}}u(x,y)-(\Delta+\lambda)_{h}^{\frac{\beta}{2}}u(x,y)\right\|_{L_\infty(\Omega)}\leq Ch^{2-\beta},~~~~~{\rm for}~\beta\in (0,2)
\end{equation}
with $C$ being a positive constant depending on $\beta$ and $\gamma$.
\end{theorem}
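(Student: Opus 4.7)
The plan is to decompose the truncation error according to the decomposition of the operator in \eqref{equdis28}. Writing $-(\Delta+\lambda)^{\beta/2}u - (-(\Delta+\lambda)^{\beta/2}_h u)$ as a sum of three contributions: (i) the error from the weighted trapezoidal rule on the singular cell $[0,h]\times[0,h]$; (ii) the error from bilinear interpolation of $\phi_\gamma$ on the regular cells with $(i,j)\neq(0,0)$; (iii) the error from the numerical approximation of the tail term $G^\infty$. Contribution (iii) is a constant integral not involving $u$, so it can be computed to arbitrary precision and absorbed into a higher-order term. The real work is bounding (i) and (ii) uniformly in $(x,y)\in\Omega$.

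For the singular cell, I would estimate
\begin{equation*}
E_0 = \int_0^h\!\!\int_0^h \phi_\gamma(\xi,\eta)(\xi^2+\eta^2)^{(\gamma-2-\beta)/2}d\eta d\xi - \tfrac{k_\gamma}{4}\bigl(\phi_\gamma(0,h)+\phi_\gamma(h,h)+\phi_\gamma(h,0)\bigr)G_{0,0}
\end{equation*}
by using Lemma \ref{lemfunc2error} to get $|\phi_\gamma(\xi,\eta)|\leq C(\xi^2+\eta^2)^{1-\gamma/2}$, which yields
$|\phi_\gamma(\xi,\eta)(\xi^2+\eta^2)^{(\gamma-2-\beta)/2}|\leq C(\xi^2+\eta^2)^{-\beta/2}$, an integrable singularity whose integral on $[0,h]^2$ is $O(h^{2-\beta})$. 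A scaling $\xi=ht$, $\eta=hs$ gives $G_{0,0}=Ch^{\gamma-\beta}$, and combined with $|\phi_\gamma(0,h)|,|\phi_\gamma(h,0)|,|\phi_\gamma(h,h)|\leq Ch^{2-\gamma}$ (again from Lemma \ref{lemfunc2error}, in fact sharpened by a Taylor expansion of $g$), each term in the quadrature approximation is $O(h^{2-\beta})$ as well. Hence $|E_0|\leq Ch^{2-\beta}$.

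For each regular cell $K_{i,j}=[\xi_i,\xi_{i+1}]\times[\eta_j,\eta_{j+1}]$ with $(i,j)\neq(0,0)$, I would apply Lemma \ref{lemmaintepola} to the bilinear interpolant of $\phi_\gamma$ and combine with Lemma \ref{lemfunc2error}:
\begin{equation*}
\|\phi_\gamma - I_h\phi_\gamma\|_{L_\infty(K_{i,j})}\leq Ch^2\Bigl(r_{ij}^{-\gamma}+r_{ij}^{1-\gamma}+r_{ij}^{2-\gamma}\Bigr),
\end{equation*}
where $r_{ij}=\sqrt{\xi_i^2+\eta_j^2}$. Multiplying by the weight and integrating, the cell error is at most $Ch^4\bigl(r_{ij}^{-2-\beta}+r_{ij}^{-1-\beta}+r_{ij}^{-\beta}\bigr)$. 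Summing over $(i,j)\neq(0,0)$ gives a Riemann sum bounded, up to a constant, by
\begin{equation*}
Ch^2\int_h^{L}\!\!\int_h^{L}\bigl(r^{-2-\beta}+r^{-1-\beta}+r^{-\beta}\bigr)d\xi d\eta \leq C h^{2-\beta},
\end{equation*}
since the dominant $r^{-2-\beta}$ term produces $h^{-\beta}$ after integration while the remaining pieces are strictly lower order. The constraint $\gamma\in(\beta,2]$ appears precisely so that $\phi_\gamma$ is bounded, $G_{0,0}$ is finite, and the three indicated powers of $r$ are integrable.

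The main obstacle I anticipate is keeping careful track of the singular behaviour near the origin: the bilinear interpolation bound blows up like $r^{-\gamma}$, which is not integrable against the weight by itself. The balance only works after one multiplies by $(\xi^2+\eta^2)^{(\gamma-2-\beta)/2}$ and exploits the cancellation in the exponents, so the proof must quantify this interplay and verify that the singular cell contribution, estimated through the weighted trapezoidal rule rather than bilinear interpolation, produces exactly the matching rate $h^{2-\beta}$. A secondary technical point is controlling the finite perturbations coming from the tempering factor $e^{-\lambda\sqrt{\xi^2+\eta^2}}$ uniformly; since this factor is smooth and bounded by $1$ on $[0,L]^2$, it can be absorbed into the constants via a Leibniz rule on the derivatives of $\phi_\gamma$.
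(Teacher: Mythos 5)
Your proposal is correct and follows essentially the same route as the paper's proof: the same splitting into the singular-cell error (bounded via the $|\phi_\gamma|\leq C(\xi^2+\eta^2)^{1-\gamma/2}$ estimate of Lemma \ref{lemfunc2error} plus the scaling $G_{0,0}\sim h^{\gamma-\beta}$) and the regular-cell interpolation error (Lemma \ref{lemmaintepola} combined with the second-derivative bounds of Lemma \ref{lemfunc2error}, then a Riemann-sum/polar-coordinate estimate yielding the dominant $h^{2-\beta}$ from the $r^{-2-\beta}$ term and lower-order $h^2$, $h^{3-\beta}$ contributions from the others). The only cosmetic difference is that you list the tail term $G^\infty$ as an explicit third error contribution and argue it away, whereas the paper treats it as computed exactly by numerical integration and omits it from the error function.
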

\begin{proof}
From (\ref{decequa1}), (\ref{equdis28}), (\ref{equtodis}) and (\ref{equdiswithI}), we obtain the error function
\begin{equation}\label{error1}
\begin{split}
  e^h_{\beta,\gamma}(x,y)=&(\Delta+\lambda)^{\frac{\beta}{2}}u(x,y)-(\Delta+\lambda)_{h}^{\frac{\beta}{2}}u(x,y)\\
        =&\left(\int_{\xi_{0}}^{\xi_1}\int_{\eta_{0}}^{\eta_1}\phi_{\gamma}(\xi,\eta)(\xi^2+\eta^2)^{\frac{\gamma-2-\beta}{2}}d\eta d\xi\right.\\
        &\left.-\int_{\xi_{0}}^{\xi_1}\int_{\eta_{0}}^{\eta_1}\frac{k_\gamma}{4}\left(\phi_{\gamma}(\xi_0,\eta_1)+\phi_{\gamma}(\xi_1,\eta_0)+\phi_{\gamma}(\xi_1,\eta_1)\right)(\xi^2+\eta^2)^{\frac{\gamma-2-\beta}{2}}d\eta d\xi\right) \\
        &+\sum_{
        \begin{subarray}
         ~i=0;j=0;\\(i,j)\neq (0,0)
        \end{subarray}
        }^{i=N_i-1;j=N_j-1}\left(\int_{\xi_{i}}^{\xi_{i+1}}\int_{\eta_{j}}^{\eta_{j+1}}\phi_{\gamma}(\xi,\eta)(\xi^2+\eta^2)^{\frac{\gamma-2-\beta}{2}}d\eta d\xi-I_{i,j}\right) \\
        =&\uppercase\expandafter{\romannumeral1}+\uppercase\expandafter{\romannumeral2}.
\end{split}
\end{equation}
For the first part of (\ref{error1}), there exists
\begin{equation}
\begin{split}
|\uppercase\expandafter{\romannumeral1}|&\leq\int_{\xi_{0}}^{\xi_1}\int_{\eta_{0}}^{\eta_1}\left(\left|\phi_{\gamma}(\xi,\eta)\right|+\frac{k_{\gamma}}{4}\left|\phi_{\gamma}(\xi_0,\eta_1)+\phi_{\gamma}(\xi_1,\eta_0)+\phi_{\gamma}(\xi_1,\eta_1)\right|\right)(\xi^2+\eta^2)^{\frac{\gamma-2-\beta}{2}} d\eta d\xi \\
&\leq\int_{\xi_{0}}^{\xi_1}\int_{\eta_{0}}^{\eta_1}\left(C(\xi^2+\eta^2)^{1-\frac{\gamma}{2}}+Ch^{2-\gamma}\right)(\xi^2+\eta^2)^{\frac{\gamma-2-\beta}{2}} d\eta d\xi .
\end{split}
\end{equation}
Taking $\xi =ph$, $\eta =qh$, we have
\begin{equation}
\begin{split}
|\uppercase\expandafter{\romannumeral1}|\leq& Ch^{2-\beta}\int_{{0}}^{1}\int_{{0}}^{1}(p^2+q^2)^{-\frac{\beta}{2}}dq dp \\
                                          & +Ch^{2-\beta}\int_{{0}}^{1}\int_{{0}}^{1}(p^2+q^2)^{\frac{\gamma-2-\beta}{2}}dq dp .
\end{split}
\end{equation}
Since $\beta<\gamma\leq2$, we obtain $-\beta>-2$ and $\gamma-2-\beta>-2$. Then it holds
\begin{equation}
|\uppercase\expandafter{\romannumeral1}|\leq Ch^{2-\beta}.
\end{equation}
For the second part of (\ref{error1}), according to Lemma \ref{lemmaintepola}, we have
\begin{equation}
\begin{split}
|\uppercase\expandafter{\romannumeral2}|\leq C&\sum_{
        \begin{subarray}
         ~i=0;j=0;\\(i,j)\neq (0,0)
        \end{subarray}
        }^{i=N_i-1;j=N_j-1}\int_{\xi_{i}}^{\xi_{i+1}}\int_{\eta_{j}}^{\eta_{j+1}}\left(\left\|\frac{\partial^2\phi_{\gamma}}{\partial\xi^2}\right\|_{L_\infty}+\left\|\frac{\partial^2\phi_{\gamma}}{\partial\eta^2}\right\|_{L_\infty}\right)h^2(\xi^2+\eta^2)^{\frac{\gamma-2-\beta}{2}}d\eta d\xi .
\end{split}
\end{equation}
Denote $\Omega_{i,j}=[\xi_{i},\xi_{i+1}]\times[\eta_{j},\eta_{j+1}]$. According to Lemma \ref{lemfunc2error}, we have
\begin{equation}
\begin{split}
  \left\|\frac{\partial^2\phi_{\gamma}}{\partial\xi^2}\right\|_{L_\infty(\Omega_{i,j})}&\leq C\sup_{(\xi,\eta)\in\Omega_{i,j}}\left((\xi^2+\eta^2)^{-\frac{\gamma}{2}}+(\xi^2+\eta^2)^{\frac{1}{2}-\frac{\gamma}{2}}+(\xi^2+\eta^2)^{1-\frac{\gamma}{2}}\right),\\
  \left\|\frac{\partial^2\phi_{\gamma}}{\partial\eta^2}\right\|_{L_\infty(\Omega_{i,j})}&\leq C\sup_{(\xi,\eta)\in\Omega_{i,j}}\left((\xi^2+\eta^2)^{-\frac{\gamma}{2}}+(\xi^2+\eta^2)^{\frac{1}{2}-\frac{\gamma}{2}}+(\xi^2+\eta^2)^{1-\frac{\gamma}{2}}\right).
\end{split}
\end{equation}
For any $(\xi,\eta)\in\Omega_{i,j}$ \,$(i,j\geq0,(i,j)\neq(0,0))$, there exists a constant $C$ satisfying
\begin{equation}
\begin{split}
&\sup_{(\xi,\eta)\in\Omega_{i,j}}\left((\xi^2+\eta^2)^{-\frac{\gamma}{2}}\right)\leq C(\xi^2+\eta^2)^{-\frac{\gamma}{2}},\\
&\sup_{(\xi,\eta)\in\Omega_{i,j}}\left((\xi^2+\eta^2)^{\frac{1}{2}-\frac{\gamma}{2}}\right)\leq C(\xi^2+\eta^2)^{\frac{1}{2}-\frac{\gamma}{2}},\\
&\sup_{(\xi,\eta)\in\Omega_{i,j}}\left((\xi^2+\eta^2)^{1-\frac{\gamma}{2}}\right)\leq C(\xi^2+\eta^2)^{1-\frac{\gamma}{2}}.
\end{split}
\end{equation}
Thus
\begin{equation}
\begin{split}
        |\uppercase\expandafter{\romannumeral2}|\leq &C\sum_{
        \begin{subarray}
         ~i=0;j=0;\\(i,j)\neq (0,0)
        \end{subarray}
        }^{i=N_i-1;j=N_j-1}\int_{\xi_{i}}^{\xi_{i+1}}\int_{\eta_{j}}^{\eta_{j+1}}h^2\left((\xi^2+\eta^2)^{\frac{-2-\beta}{2}}+(\xi^2+\eta^2)^{\frac{-1-\beta}{2}}+(\xi^2+\eta^2)^{-\frac{\beta}{2}}\right)d\eta d\xi .\\
        \leq& |\uppercase\expandafter{\romannumeral2}_1|+|\uppercase\expandafter{\romannumeral2}_2|+|\uppercase\expandafter{\romannumeral2}_3|.\\
\end{split}
\end{equation}
Taking $\xi =ph$, $\eta =qh$, we have
\begin{equation}
\begin{split}
  |\uppercase\expandafter{\romannumeral2}_1|\leq& C\sum_{
        \begin{subarray}
         ~i=0;j=0;\\(i,j)\neq (0,0)
        \end{subarray}
        }^{i=N_i-1;j=N_j-1}h^{2-\beta}\int_{{i}}^{i+1}\int_{{j}}^{j+1}(p^2+q^2)^{\frac{-2-\beta}{2}}dq dp .
  \end{split}
\end{equation}
And since $-2-\beta<-2$, it holds
\begin{equation}\label{eques2_1}
  |\uppercase\expandafter{\romannumeral2}_1|\leq Ch^{2-\beta}.
\end{equation}
Then, we have
\begin{equation}
  \begin{split}
    |\uppercase\expandafter{\romannumeral2}_3|\leq Ch^2\sum_{
        \begin{subarray}
         ~i=0;j=0;\\(i,j)\neq (0,0)
        \end{subarray}
        }^{i=N_i-1;j=N_j-1}\int_{\xi_{i}}^{\xi_{i+1}}\int_{\eta_{j}}^{\eta_{j+1}}(\xi^2+\eta^2)^{-\frac{\beta}{2}}d\eta d\xi .\\
  \end{split}
\end{equation}
Take $\xi=r\cos(\theta)$, $\eta=r\sin(\theta)$. Since $0<\beta<2$, there exists
\begin{equation}\label{eques2_3}
  \begin{split}
    |\uppercase\expandafter{\romannumeral2}_3|\leq& Ch^2\int_{0}^{\frac{\pi}{2}}\int_{h}^{\sqrt{2}L}r^{1-\beta}dr d\theta\\
    \leq&Ch^2.
  \end{split}
\end{equation}
For $|\uppercase\expandafter{\romannumeral2}_2|$, being similar to $|\uppercase\expandafter{\romannumeral2}_1|$ and $|\uppercase\expandafter{\romannumeral2}_3|$, we have
\begin{equation}\label{eques2_2}
  |\uppercase\expandafter{\romannumeral2}_2|\leq\left\{
  \begin{split}
    &Ch^2~~~~~~~\beta\in(0,1];\\
    &Ch^{3-\beta}~~\beta\in(1,2).
  \end{split}\right.
\end{equation}
From (\ref{eques2_1}), (\ref{eques2_3}) and (\ref{eques2_2}), it can be obtained that
\begin{equation}
\begin{split}
|\uppercase\expandafter{\romannumeral2}|\leq &C h^{2-\beta}.
\end{split}
\end{equation}
So for $u(x,y)\in C^2(\mathbb{R}^2)$, we have
\begin{equation}
\left\|e^h_{\beta,\gamma}(x,y)\right\|_{L_\infty}\leq Ch^{2-\beta}.
\end{equation}
Then, the proof is completed.
\end{proof}

\section{Error estimates}
Now, we turn to the convergence proof of the designed scheme for the tempered fractional Poisson problem with Dirichlet boundary conditions (\ref{defequ1}).

%In this sections, we provide a convergence proof of solution to the tempered fractional Poisson problem with Dirichlet boundary conditions (\ref{defequ1}) by the present method.

\begin{lemma}\label{lemmaGersgorin}\cite{Axelsson1996}
  The spectrum $\lambda(A)$ of the matrix $A=[a_{i,j}]$ is enclosed in the union of the discs
  \begin{equation}
  C_i=\{z\in \mathbb{C};|z-a_{i,i}|\leq\sum_{i\neq j}|a_{i,j}|\},~1\leq i\leq n
  \end{equation}
  and in the union of the discs
  \begin{equation}
  C'_i=\{z\in \mathbb{C};|z-a_{i,i}|\leq\sum_{i\neq j}|a_{j,i}|\},~1\leq i\leq n.
  \end{equation}
\end{lemma}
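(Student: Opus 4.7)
The plan is to prove this by the classical argument behind Gershgorin's disc theorem: if $\lambda$ is an eigenvalue of $A$, pick an eigenvector and exploit its largest component to localize $\lambda$ near one of the diagonal entries.

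First I would take an arbitrary $\lambda\in\lambda(A)$ together with an eigenvector $x=(x_1,\dots,x_n)^T\neq 0$ satisfying $Ax=\lambda x$. Let $i$ be an index at which $|x_i|=\max_{1\le k\le n}|x_k|$; note $|x_i|>0$ because $x\neq 0$. Reading off the $i$-th component of the eigenvalue equation gives $\sum_{j=1}^{n}a_{i,j}x_j=\lambda x_i$, which I would rearrange as
\begin{equation}
(\lambda-a_{i,i})x_i=\sum_{j\neq i}a_{i,j}x_j.
\end{equation}
Taking absolute values, applying the triangle inequality, and using $|x_j|\le|x_i|$ for every $j$, I obtain $|\lambda-a_{i,i}|\,|x_i|\le |x_i|\sum_{j\neq i}|a_{i,j}|$, and then dividing by $|x_i|>0$ yields $\lambda\in C_i$. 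Since $\lambda$ was arbitrary, $\lambda(A)\subset\bigcup_{i}C_i$.

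For the second inclusion, I would invoke the well-known fact that $A$ and $A^{T}$ have the same spectrum (they share the characteristic polynomial). Applying the argument already established to $A^{T}=[a_{j,i}]$, whose $(i,j)$-entry is $a_{j,i}$, the off-diagonal row sums of $A^{T}$ are precisely the off-diagonal column sums of $A$, namely $\sum_{j\neq i}|a_{j,i}|$. Hence every eigenvalue of $A^{T}$, and therefore of $A$, lies in $\bigcup_{i}C'_i$.

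The proof is essentially routine, with no genuine obstacle; the only point that requires care is the choice of the index $i$ maximizing $|x_i|$ so that the bound $|x_j|\le|x_i|$ holds uniformly, together with the verification that $|x_i|>0$ so that the final division is legitimate. Both are immediate from $x\neq 0$.
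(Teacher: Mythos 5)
Your proof is correct and complete: it is the classical Gershgorin argument (maximal eigenvector component for the row discs, equality of the spectra of $A$ and $A^{T}$ for the column discs), and both delicate points — the choice of the maximizing index and the positivity $|x_i|>0$ — are handled properly. Note that the paper itself offers no proof of this lemma, citing it directly from \cite{Axelsson1996}, so your argument simply supplies the standard proof of the quoted result.
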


Next, we give the proposition of the weights $w_{i,j}$. From (\ref{equweightoffl}), it's easy to verify the following properties of weights.
\begin{proposition}\label{proweight}
  The weights of the tempered fractional Laplacian satisfy %the following properties,
  \begin{equation}
  \left\{
  \begin{split}
    &\sum_{i=-N_i}^{i=N_i}\sum_{j=-N_j}^{j=N_j}w_{|i|,|j|}>CG^\infty>0;\\
    &w_{i,j}<0,~~~~~(i,j)~\neq (0,0).
  \end{split}
  \right.
  \end{equation}
\end{proposition}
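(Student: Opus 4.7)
The plan is to extract both claims from one structural fact: the bilinear-interpolation weights $W^k_{i,j}$ and the corner weight $G_{0,0}$ are integrals of non-negative integrands against the strictly positive kernel $(\xi^2+\eta^2)^{(\gamma-2-\beta)/2}$, so they are non-negative (in fact, strictly positive).

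For the second claim ($w_{i,j}<0$ when $(i,j)\neq(0,0)$), I would first rewrite each $W^k_{i,j}$ of (\ref{equdefW}) as a single double integral of a bilinear Lagrange basis function. For instance, expanding the product in $W^1_{i,j}$ and matching with (\ref{equdefG}) gives
\begin{equation*}
W^1_{i,j}=\frac{1}{h^2}\int_{\xi_i}^{\xi_{i+1}}\!\!\int_{\eta_j}^{\eta_{j+1}}(\xi_{i+1}-\xi)(\eta_{j+1}-\eta)(\xi^2+\eta^2)^{\frac{\gamma-2-\beta}{2}}d\eta\, d\xi,
\end{equation*}
and analogously for $W^2,W^3,W^4$ on the appropriate neighboring rectangles. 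Each such integrand is a product of two non-negative linear factors and a positive kernel, so $W^k_{i,j}>0$; similarly $G_{0,0}>0$. Since $k_\gamma>0$, $c_{2,\beta,\lambda}>0$, and the exponential/power denominators in (\ref{equweightoffl}) are positive, every case of that formula with $(i,j)\neq(0,0)$ reads $-c_{2,\beta,\lambda}$ times a strictly positive sum, hence $w_{i,j}<0$.

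For the first claim, the cleanest route is to apply the scheme (\ref{defdis}) to the constant stencil $u_{p-i,q-j}\equiv 1$. The left side is exactly $\sum_{i,j}w_{|i|,|j|}$. To evaluate the right side I would trace back through the derivation (\ref{equdis28})--(\ref{equdiswithI}): the quadrature of the main integral is a linear combination of the values $\phi_\gamma(\xi_i,\eta_j)$, each of which vanishes when $u\equiv 1$ because $g(x,y,\xi,\eta)\equiv 0$ (this also covers the corner-limit convention used when $\gamma<2$, and for $\gamma=2$ the corner node is not sampled at all). Only the $G^\infty$ correction $-4u(x,y)G^\infty$ survives, yielding
\begin{equation*}
\sum_{i,j}w_{|i|,|j|}=-c_{2,\beta,\lambda}\bigl(-4G^\infty\bigr)=4c_{2,\beta,\lambda}G^\infty.
\end{equation*}
Choosing any $C\in(0,4c_{2,\beta,\lambda})$ then gives the desired strict inequality.

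The main obstacle is the case-heavy bookkeeping in (\ref{equweightoffl})/(\ref{equdiswithI}): there are roughly eleven stencil-position cases (interior, axis, boundary, corner), and one needs to verify that when the multiplicities $1$, $2$, $4$ induced by the absolute-value pairing $|i|,|j|$ in $\sum w_{|i|,|j|}$ are applied, every off-center ratio cancels exactly against the corresponding entry inside $w_{0,0}$. The ``apply to $u\equiv 1$'' identity bundles this cancellation into a one-line consequence of linearity, once one accepts that the derivation (\ref{equdis28})--(\ref{defdis}) is an exact algebraic rewriting of the scheme for any function supported in the computational stencil, so no direct combinatorial computation is required.
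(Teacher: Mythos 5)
Your proposal is correct, and it splits naturally into two parts. For the second claim ($w_{i,j}<0$ for $(i,j)\neq(0,0)$) you do exactly what the paper does: rewrite each $W^k_{i,j}$ as a single integral of a product of two linear factors against the kernel $(\xi^2+\eta^2)^{(\gamma-2-\beta)/2}$ (the paper writes $W^1_{i,j}=\frac{1}{h^2}\int\!\!\int(\xi-\xi_{i+1})(\eta-\eta_{j+1})(\xi^2+\eta^2)^{\frac{\gamma-2-\beta}{2}}\,d\eta\, d\xi\geq 0$, the same expression up to sign convention), conclude positivity together with $G_{0,0}>0$, and read off the sign from (\ref{equweightoffl}). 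For the first claim your route is genuinely different in organization: the paper merely asserts ``combining $G^\infty>0$ and $G_{0,0}>0$, one can get $\sum w_{|i|,|j|}>CG^\infty>0$,'' implicitly relying on the fact that in the sum the off-center weights (with multiplicities $1$, $2$, $4$ from the $|i|,|j|$ pairing) cancel exactly against the corresponding entries inside $w_{0,0}$, leaving $4c_{2,\beta,\lambda}G^\infty$; it never exhibits this cancellation. Your ``apply the scheme to $u\equiv 1$'' argument makes that cancellation rigorous in one line: since the weights are, by construction, the collected coefficients of a linear form in the grid values, and every $\phi_\gamma$ node value is built from $g$, which vanishes identically for a constant, the identity $\sum_{i,j}w_{|i|,|j|}=4c_{2,\beta,\lambda}G^\infty$ follows from linearity alone, and any $C\in(0,4c_{2,\beta,\lambda})$ works. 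This is cleaner than the paper in two respects: it pins down the constant $C$ explicitly, and it is robust against the case-by-case index bookkeeping in (\ref{equweightoffl}) (where, as written, there are minor boundary-index inconsistencies with (\ref{equdiswithI}), e.g.\ sums starting at $i=2$ versus $i=1$ along $j=N_j$); your argument only presupposes that the weight table is the exact coefficient collection of the quadrature, which is its definition. Note also that your observation that $\lim_{(\xi,\eta)\to(0,0)}\phi_\gamma=0$ for constant $u$, and that the corner node is not sampled when $\gamma=2$, correctly closes the only place where the identity could have leaked.
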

\begin{proof}
According to (\ref{equweightoffl}), we just need to prove that $W^1_{i,j}$, $W^2_{i,j}$, $W^3_{i,j}$, $W^4_{i,j}>0$. Combining (\ref{equdefG}) and (\ref{equdefW}), there exists
\begin{equation}
\begin{split}
  W^1_{i,j}&=\frac{1}{h^2}\int_{\xi_{i}}^{\xi_{i+1}}\int_{\eta_{j}}^{\eta_{j+1}}(\xi-\xi_{i+1})(\eta-\eta_{i+1})(\xi^2+\eta^2)^{\frac{\gamma-2-\beta}{2}}d\eta d\xi\\
  &\geq 0.
  \end{split}
\end{equation}
The proof for $W^2_{i,j}$, $W^3_{i,j}$ and $W^4_{i,j}$ is similar to the one for $W^1_{i,j}$. Combining $G^\infty>0$ and $G_{0,0}>0$, one can get $\sum_{i=-N_i}^{i=N_i}\sum_{j=N_j}^{j=N_j}w_{|i|,|j|}>CG^\infty>0$ for some $C>0$. For $w_{i,j}<0\,((i,j)\neq (0,0))$, one can directly get from (\ref{equweightoffl}).
\end{proof}

According to Proposition \ref{proweight} and Lemma \ref{lemmaGersgorin}, the minimum eigenvalue of $B$ satisfies
\begin{equation}
\lambda_{min}(B)>CG^\infty>0.
\end{equation}
So $B$ is a strictly diagonally dominant and symmetric positive definite matrix.
\begin{theorem}\label{thmposerror}
Suppose that $u$ is the exact solution of the tempered fractional Poisson equation (\ref{defequ1}) and $\mathbf{U}_h$ is the solution of the finite difference scheme (\ref{matex1}). Then, there are
\begin{equation}
\begin{split}
   &\left\|\mathbf{U}-\mathbf{U}_h\right\|\leq C\left\|(\Delta+\lambda)_{h}^{\frac{\beta}{2}}\mathbf{U}-((\Delta+\lambda)_{h}^{\frac{\beta}{2}}\mathbf{U}_h)\right\|,\\
   & \left\|\mathbf{U}-\mathbf{U}_h\right\|_{\infty}\leq C\left\|(\Delta+\lambda)_{h}^{\frac{\beta}{2}}\mathbf{U}-((\Delta+\lambda)_{h}^{\frac{\beta}{2}}\mathbf{U}_h)\right\|_{\infty}.
\end{split}
\end{equation}
\end{theorem}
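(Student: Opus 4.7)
The plan is to read both inequalities as stability bounds on the inverse of the stiffness matrix $B$, i.e., to show that $\|B^{-1}\|$ is uniformly bounded in both the discrete $L_2$ norm and the discrete $L_\infty$ norm, and then write
\begin{equation*}
\mathbf{U}-\mathbf{U}_h = B^{-1}\bigl(B\mathbf{U}-B\mathbf{U}_h\bigr) = -B^{-1}\bigl((\Delta+\lambda)_h^{\beta/2}\mathbf{U}-(\Delta+\lambda)_h^{\beta/2}\mathbf{U}_h\bigr).
\end{equation*}

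For the $L_2$ bound I would proceed as follows. Proposition \ref{proweight} gives $w_{i,j}<0$ for $(i,j)\neq(0,0)$ while the diagonal entry $w_{0,0}$ dominates and the row sums of $B$ are bounded below by $CG^\infty>0$. Combined with Lemma \ref{lemmaGersgorin} (Ger\v{s}gorin), this forces every eigenvalue of $B$ to lie in the half-plane $\{\mathrm{Re}\,z\geq CG^\infty\}$. Since $B$ is symmetric, its eigenvalues are real and hence $\lambda_{\min}(B)\geq CG^\infty>0$. Therefore $\|B^{-1}\|_2\leq 1/\lambda_{\min}(B)\leq C$, and the first inequality follows immediately from the identity above together with the compatibility of the discrete $L_2$ norm with the spectral norm (the factor $h$ cancels on both sides).

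For the $L_\infty$ bound the key point is to exploit the M-matrix structure of $B$. Proposition \ref{proweight} states that the off-diagonal entries of $B$ are non-positive and the diagonal is positive; strict diagonal dominance together with sign pattern gives that $B$ is an M-matrix, in particular $B^{-1}\geq 0$ componentwise. Let $\mathbf{e}=(1,\dots,1)^T$ and let $\mathbf{s}=B\mathbf{e}$ be the vector of row sums. By Proposition \ref{proweight}, $s_i\geq CG^\infty>0$ for every $i$. Applying $B^{-1}$ componentwise to $\mathbf{s}\geq CG^\infty\,\mathbf{e}$ and using $B^{-1}\geq 0$ yields $\mathbf{e}=B^{-1}\mathbf{s}\geq CG^\infty\,B^{-1}\mathbf{e}$, so that each row sum of $B^{-1}$ is at most $1/(CG^\infty)$. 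Since $B^{-1}\geq 0$, this row-sum bound is exactly $\|B^{-1}\|_\infty\leq C$, and the second inequality follows.

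The main obstacle I expect is purely the bookkeeping of checking all the sign and row-sum conditions in Proposition \ref{proweight} so that $B$ really is a strictly diagonally dominant symmetric M-matrix with row sums bounded below by a positive constant independent of $h$; once this is in hand both estimates reduce to the standard Ger\v{s}gorin/M-matrix arguments sketched above. No additional analysis of the truncation error (Theorem \ref{thmtrunct}) is needed at this step; that error only enters when the right-hand side of the two inequalities above is ultimately estimated in terms of $h^{2-\beta}$.
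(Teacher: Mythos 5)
Your proposal is correct, and its overall skeleton --- prove stability of $B^{-1}$ in both discrete norms, then apply it to the error equation $B(\mathbf{U}-\mathbf{U}_h)=-\bigl((\Delta+\lambda)_h^{\beta/2}\mathbf{U}-(\Delta+\lambda)_h^{\beta/2}\mathbf{U}_h\bigr)$ --- is exactly the paper's. For the $L_2$ bound the two arguments are essentially the same: the paper derives $\lambda_{\min}(B)>CG^\infty$ from Proposition \ref{proweight} and Lemma \ref{lemmaGersgorin} just before the theorem, and then phrases stability as coercivity, $(B\mathbf{U}_h,\mathbf{U}_h)\geq CG^\infty\|\mathbf{U}_h\|^2$, plus Cauchy--Schwarz; your $\|B^{-1}\|_2\leq 1/\lambda_{\min}(B)$ is the same spectral fact written in inverse form. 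Where you genuinely diverge is the $L_\infty$ bound. The paper uses a direct discrete maximum principle: it evaluates the scheme at an index $(p,q)$ where $|u^h_{p,q}|=\|\mathbf{U}_h\|_\infty$ and uses the sign pattern $w_{i,j}<0$ for $(i,j)\neq(0,0)$ together with the row-sum lower bound to conclude $CG^\infty\|\mathbf{U}_h\|_\infty\leq\|F\|_\infty$. You instead invoke M-matrix theory: $B$ is a strictly diagonally dominant Z-matrix with positive diagonal, hence $B^{-1}\geq 0$ componentwise, and $B\mathbf{e}\geq CG^\infty\mathbf{e}$ then gives $\|B^{-1}\|_\infty\leq 1/(CG^\infty)$. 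Both routes rest on the same Proposition \ref{proweight}; yours is cleaner in that it isolates the stability constant as an explicit bound on $\|B^{-1}\|_\infty$, but it imports the standard (and unproved-in-the-paper) theorem that such a Z-matrix has a nonnegative inverse, whereas the paper's maximizing-index argument is elementary and self-contained. One point you should make explicit in your row-sum step: a given row of $B$ contains only those weights $w_{|i|,|j|}$ whose shifted index stays inside the grid, not the full sum appearing in Proposition \ref{proweight}; since the omitted entries are all negative, dropping them only increases the row sum, so $B\mathbf{e}\geq CG^\infty\mathbf{e}$ indeed holds, but this is the place where the boundedness of the domain and the homogeneous exterior condition actually enter.
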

\begin{proof}
According to the definition of $G^{\infty}$, taking an inner product of (\ref{matex1}) with $\mathbf{U}_h$ and using the Cauchy-Schwarz inequality, we have
\begin{equation}
CG^{\infty}\left\|\mathbf{U}_h\right\|^2\leq(B\mathbf{U}_h,\mathbf{U}_h)\leq \left\|F\right\|\left\|\mathbf{U}_h\right\|,
\end{equation}
which leads to
\begin{equation}\label{equL2error10}
 \|\mathbf{U}_h\|^2\leq\frac{1}{CG^{\infty}}\|F\|\|\mathbf{U}_h\|.
\end{equation}
Thus
\begin{equation}\label{equL2error1}
 \|\mathbf{U}_h\|\leq\frac{1}{CG^{\infty}}\|F\|.
\end{equation}
Assuming $\|\mathbf{U}_h\|_\infty=|u^h_{p,q}|$, according to (\ref{equweightoffl}), we obtain that
\begin{equation}
\begin{split}
&u^h_{p,q}\left(\sum_{i=-N_i}^{i=N_i}\sum_{j=-N_j}^{j=N_j}w_{|i|,|j|}u^h_{p-i,q-j}-4c_{2,\beta,\lambda}G^\infty u^h_{p,q}\right)\\
=&u^h_{p,q}\left(\sum_{
        \begin{subarray}
         ~i=-N_i;j=-N_j;\\(i,j)\neq (0,0)
        \end{subarray}
        }^{i=N_i;j=N_j}w_{|i|,|j|}u^h_{p-i,q-j}+(w_{0,0}-4c_{2,\beta,\lambda}G^\infty) u^h_{p,q}\right)\\
\geq&\sum_{
        \begin{subarray}
         ~i=-N_i;j=-N_j;\\(i,j)\neq (0,0)
        \end{subarray}
        }^{i=N_i;j=N_j}-w_{|i|,|j|}((u^h_{p,q})^2-u^h_{p,q}u^h_{p-i,q-j})\\
\geq&0,
\end{split}
\end{equation}
which implies
\begin{equation}
CG^\infty \left\|\mathbf{U}_{h}\right\|_{\infty}\leq\left|F_{p,q}\right|.
\end{equation}
So we have
\begin{equation}\label{equinferror1}
CG^\infty \left\|\mathbf{U}_{h}\right\|_{\infty}\leq\|F\|_\infty.
\end{equation}
In addition, from (\ref{desdif2})
\begin{equation}\label{equdifdif}
\mathbf{B}(\mathbf{U}-\mathbf{U}_h)=(-(\Delta+\lambda)_{h}^{\frac{\beta}{2}}\mathbf{U})-(-(\Delta+\lambda)_{h}^{\frac{\beta}{2}}\mathbf{U}_h).
\end{equation}
Applying (\ref{equL2error1}) and (\ref{equinferror1}) to (\ref{equdifdif}), the desired results are obtained.
\end{proof}

\begin{theorem}
Suppose $u\in C^2({\mathbb{R}^2})$ is the exact solution of (\ref{defequ1}), and $\mathbf{U}_h$ is the solution of the difference scheme (\ref{matex1}). Then
\begin{equation}
  \left\|\mathbf{U}-\mathbf{U}_h\right\|\leq Ch^{2-\beta},~~\left\|\mathbf{U}-\mathbf{U}_h\right\|_{\infty}\leq Ch^{2-\beta}.
\end{equation}
\end{theorem}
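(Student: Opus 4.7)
The plan is to chain two results already proved: the consistency estimate of Theorem \ref{thmtrunct} and the discrete stability estimate of Theorem \ref{thmposerror}. Theorem \ref{thmposerror} tells us that the error $\mathbf{U}-\mathbf{U}_h$ in either norm is controlled by the corresponding norm of $(-(\Delta+\lambda)_h^{\beta/2}\mathbf{U}) - (-(\Delta+\lambda)_h^{\beta/2}\mathbf{U}_h)$, so the whole proof reduces to identifying this residual with the truncation error and applying Theorem \ref{thmtrunct}.

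First I would rewrite the residual. Since $u$ is the exact solution of (\ref{defequ1}), evaluating $-(\Delta+\lambda)^{\beta/2}u$ at the grid points $(-l+ph,-l+qh)$ produces exactly the data vector $F$ used in (\ref{matex1}); and by definition of $\mathbf{U}_h$, one has $-(\Delta+\lambda)_h^{\beta/2}\mathbf{U}_h = B\mathbf{U}_h = F$. Therefore
\begin{equation*}
(-(\Delta+\lambda)_h^{\beta/2}\mathbf{U}) - (-(\Delta+\lambda)_h^{\beta/2}\mathbf{U}_h) = (-(\Delta+\lambda)_h^{\beta/2}\mathbf{U}) - F = -\bigl(e^h_{\beta,\gamma}(x_p,y_q)\bigr)_{p,q},
\end{equation*}
where $e^h_{\beta,\gamma}$ is precisely the pointwise truncation error studied in (\ref{error1}). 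Theorem \ref{thmtrunct} then gives $\|e^h_{\beta,\gamma}\|_{L_\infty(\Omega)} \leq Ch^{2-\beta}$, so in particular the discrete $L_\infty$ norm of this residual is bounded by $Ch^{2-\beta}$, and the discrete $L_2$ norm inherits the same bound because $\|\cdot\| \leq C\|\cdot\|_\infty$ on the fixed bounded grid (the weight $h$ times the number of nodes is bounded by $L^2$).

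Combining these ingredients via Theorem \ref{thmposerror} yields the two desired inequalities simultaneously,
\begin{equation*}
\|\mathbf{U}-\mathbf{U}_h\| \leq C\|e^h_{\beta,\gamma}\|_{L_\infty} \leq Ch^{2-\beta}, \qquad \|\mathbf{U}-\mathbf{U}_h\|_\infty \leq C\|e^h_{\beta,\gamma}\|_{L_\infty} \leq Ch^{2-\beta}.
\end{equation*}
There is no substantive obstacle: the heavy lifting was done in the truncation analysis (Theorem \ref{thmtrunct}) and in the discrete maximum principle / coercivity arguments that produced Theorem \ref{thmposerror}. The only point requiring a line of care is the identification of $F$ with the grid restriction of $-(\Delta+\lambda)^{\beta/2}u$ and the passage from the pointwise $L_\infty$ bound on $e^h_{\beta,\gamma}$ to its discrete $L_2$ counterpart on the bounded computational domain.
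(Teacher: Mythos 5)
Your proposal is correct and follows essentially the same route as the paper: the paper's own proof is a one-line statement that the result follows by combining Theorem \ref{thmtrunct} (truncation error) with Theorem \ref{thmposerror} (stability), which is exactly your chaining argument. The only difference is that you spell out the details the paper leaves implicit, namely the identification of the residual $B\mathbf{U}-F$ with the grid restriction of the truncation error $e^h_{\beta,\gamma}$ and the passage from the pointwise $L_\infty$ bound to the discrete norms.
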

\begin{proof}
Combining Theorem \ref{thmtrunct} and Theorem \ref{thmposerror} leads to that for $u\in C^2(\mathbb{R}^2)$,
\begin{equation}
 \left\|\mathbf{U}-\mathbf{U}_h\right\|\leq Ch^{2-\beta},~~\left\|\mathbf{U}-\mathbf{U}_h\right\|_{\infty}\leq Ch^{2-\beta}.
\end{equation}

\end{proof}

%%%%%%%%%%%%%%%%%%%%%%%%%%%%%%%%%%%%%%%%%%%%%%%%%%%%%%%%%%
\section{Numerical experiments}

In this section, extensive numerical experiments are performed, including verifying the theoretical results on convergence rates and showing the effectiveness of the scheme by simulating (\ref{defequ1}) without known solution. The convergence results for $\lambda=0$ are also reported. Without loss of generality, we consider the domain $\Omega=(-1,1)\times(-1,1)$.

%In this section, we verify the accuracy of our method in discretizing the tempered fractional Laplacian and solving the tempered fractional Poisson equation (\ref{defequ1}). And we also give the accuracy of the method for fractional Laplacian by setting $\lambda=0$. Without loss of generality, we consider the domain $\Omega=(-1,1)\times(-1,1)$.

\subsection{The truncation error of the tempered fractional Laplacian}
This subsection shows the truncation errors and convergence rates of discretizing the tempered fractional Laplacian. The $L_{\infty}$ norm and  $L_{2}$ norm are used to measure the truncation errors here.
\begin{example}
Compute $(\Delta+\lambda)^{\beta/2}u(x,y)$ with $u(x,y)=(1-x^2)^3(1-y^2)^3$ ($u(x,y)\in C^2(\mathbb{R}^2)$).

 Table \ref{tab:dC2r0g1a05} shows the accuracy of computing $(\Delta+\lambda)^{\beta/2}u(x,y)$ with  $\lambda=0$ and $\gamma=1+\frac{\beta}{2}$, which verifies the numerical discretizations for fractional Laplacian. Table \ref{tab:dC2r05g1a05} shows the accuracy of computing $(\Delta+\lambda)^{\beta/2}u(x,y)$ with $\lambda=0.5$ and $\gamma=1+\frac{\beta}{2}$. We find that for the fixed mesh size $h$, the numerical errors will be larger as the parameter $\beta$ increases and the truncation error is $O(h^{2-\beta})$ for any $\beta\in(0,2)$ from Table \ref{tab:dC2r0g1a05} and \ref{tab:dC2r05g1a05}. These results are consistent with the theoretical predictions.
% Table generated by Excel2LaTeX from sheet 'Sheet1'

\begin{table}[h]\fontsize{6pt}{10pt}\selectfont%生成浮动表格
 \begin{center}%\def\tabcolsep{28.5pt}%表格居中
  \caption {Numerical approximation errors and convergence orders for  $(\Delta+\lambda)^{\beta/2}(1-x^2)^3(1-y^2)^3$ with $\lambda=0$ and $\gamma=1+\frac{\beta}{2}$} \vspace{5pt}% 标题，离表格一定的距离
\begin{tabular*}{\linewidth}{@{\extracolsep{\fill}}*{10}{c}}                                    \hline  %画顶端的横线
 $\beta\backslash h$          &            &      1/8   &      1/16  &      1/32  &      1/64  &      1/128 &      1/256 \\
\hline
           &$L_\infty$            & 2.5155E-02 & 9.9477E-03 & 3.7440E-03 & 1.3771E-03 & 4.9991E-04 & 1.7998E-04 \\

       0.5 &Rate            &            &    1.3384  &    1.4098  &    1.4430  &    1.4619  &    1.4738  \\

           &$L_2$            & 1.5053E-02 & 6.0283E-03 & 2.2798E-03 & 8.4041E-04 & 3.0548E-04 & 1.1007E-04 \\

           &Rate            &            &    1.3202  &    1.4029  &    1.4397  &    1.4600  &    1.4727  \\

           &$L_\infty$            & 9.0498E-02 & 4.2795E-02 & 1.9321E-02 & 8.5654E-03 & 3.7652E-03 & 1.6479E-03 \\

       0.8 &Rate            &            &    1.0804  &    1.1473  &    1.1736  &    1.1858  &    1.1921  \\

           &$L_2$            & 5.4199E-02 & 2.5992E-02 & 1.1792E-02 & 5.2379E-03 & 2.3045E-03 & 1.0091E-03 \\

           &Rate            &            &    1.0602  &    1.1402  &    1.1708  &    1.1845  &    1.1914  \\

           &$L_\infty$            & 4.0132E-01 & 2.4528E-01 & 1.4377E-01 & 8.3182E-02 & 4.7909E-02 & 2.7548E-02 \\

       1.2 &Rate            &            &    0.7103  &    0.7706  &    0.7895  &    0.7960  &    0.7984  \\

           &$L_2$            & 2.4038E-01 & 1.4921E-01 & 8.7882E-02 & 5.0922E-02 & 2.9344E-02 & 1.6876E-02 \\

           &Rate            &            &    0.6880  &    0.7637  &    0.7873  &    0.7952  &    0.7981  \\

           &$L_\infty$            & 1.1249E+00 & 8.4067E-01 & 6.0367E-01 & 4.2877E-01 & 3.0360E-01 & 2.1477E-01 \\

       1.5 &Rate            &            &    0.4202  &    0.4778  &    0.4935  &    0.4981  &    0.4994  \\

           &$L_2$            & 6.7336E-01 & 5.1163E-01 & 3.6915E-01 & 2.6256E-01 & 1.8598E-01 & 1.3158E-01 \\

           &Rate            &            &    0.3963  &    0.4709  &    0.4916  &    0.4975  &    0.4992  \\
           \hline
    \end{tabular*}\label{tab:dC2r0g1a05}%\vspace{-15pt}
  \end{center}
\end{table}

 Comparing Table \ref{tab:dC2r0g1a05} with \ref{tab:dC2r05g1a05}, it's easy to see that the convergence rates are independent of $\lambda$  and the numerical errors become smaller as the parameter $\lambda$ increases for fixed $h$ and $\beta$.
\begin{table}[h]\fontsize{6pt}{10pt}\selectfont%生成浮动表格
 \begin{center}%\def\tabcolsep{28.5pt}%表格居中
  \caption {Numerical approximation errors and convergence orders for  $(\Delta+\lambda)^{\beta/2}(1-x^2)^3(1-y^2)^3$ with $\lambda=0.5$ and $\gamma=1+\frac{\beta}{2}$} \vspace{5pt}% 标题，离表格一定的距离
\begin{tabular*}{\linewidth}{@{\extracolsep{\fill}}*{10}{c}}                                    \hline  %画顶端的横线
  $\beta\backslash h$         &            &      1/8   &      1/16  &      1/32  &      1/64  &      1/128 &      1/256 \\
\hline
           &$L_\infty$             & 2.1316E-02 & 9.0524E-03 & 3.5350E-03 & 1.3277E-03 & 4.8808E-04 & 1.7711E-04 \\

       0.5 &Rate            &            &    1.2356  &    1.3566  &    1.4128  &    1.4437  &    1.4624  \\

           &$L_2$            & 1.2757E-02 & 5.4918E-03 & 2.1546E-03 & 8.1088E-04 & 2.9842E-04 & 1.0836E-04 \\

           &Rate            &            &    1.2159  &    1.3498  &    1.4099  &    1.4422  &    1.4615  \\

           &$L_\infty$             & 7.6879E-02 & 3.9268E-02 & 1.8428E-02 & 8.3409E-03 & 3.7089E-03 & 1.6338E-03 \\

       0.8 &Rate            &            &    0.9693  &    1.0915  &    1.1436  &    1.1692  &    1.1827  \\

           &$L_2$            & 4.6014E-02 & 2.3861E-02 & 1.1252E-02 & 5.1020E-03 & 2.2705E-03 & 1.0005E-03 \\

           &Rate            &            &    0.9474  &    1.0845  &    1.1410  &    1.1681  &    1.1822  \\

           &$L_\infty$             & 3.3838E-01 & 2.2543E-01 & 1.3772E-01 & 8.1362E-02 & 4.7368E-02 & 2.7388E-02 \\

       1.2 &Rate            &            &    0.5860  &    0.7110  &    0.7593  &    0.7805  &    0.7904  \\

           &$L_2$            & 2.0238E-01 & 1.3714E-01 & 8.4188E-02 & 4.9811E-02 & 2.9013E-02 & 1.6778E-02 \\

           &Rate            &            &    0.5615  &    0.7039  &    0.7572  &    0.7798  &    0.7901  \\

           &$L_\infty$             & 9.3441E-01 & 7.6900E-01 & 5.7753E-01 & 4.1937E-01 & 3.0023E-01 & 2.1357E-01 \\

       1.5 &Rate            &            &    0.2811  &    0.4131  &    0.4617  &    0.4821  &    0.4914  \\

           &$L_2$            & 5.5809E-01 & 4.6792E-01 & 3.5317E-01 & 2.5680E-01 & 1.8392E-01 & 1.3084E-01 \\

           &Rate            &            &    0.2542  &    0.4059  &    0.4597  &    0.4816  &    0.4912  \\
\hline
    \end{tabular*}\label{tab:dC2r05g1a05}%\vspace{-15pt}
  \end{center}
\end{table}

\begin{tabular}{cccccccc}
\hline

\end{tabular}

\end{example}

\subsection{Convergence rates for solving the tempered fractional Poisson equation}
\begin{example}
We solve (\ref{defequ1}) with different $\beta$ and $\gamma$, and the exact solution is taken as  $u(x,y)=(1-x^2)^3(1-y^2)^3$, where $u\in C^2(\mathbb{R}^2)$. The source term $f(x,y)$ is obtained numerically by the algorithm in Appendix A.
 %We consider that the exact solution of (\ref{defequ1}) is $u(x,y)=(1-x^2)^3(1-y^2)^3$ with different $\beta$ and $\gamma$, where $u\in C^2(\mathbb{R}^2)$. The source term $f(x,y)$ is obtained numerically by the algorithm in Appendix A. The error and convergence rates are shown as follows.  The $L_{\infty}$ norm and  $L_{2}$ norm are also used to measure the numerical error here.

Table \ref{tab:sC2r0g1a05} shows that the convergence rate is $O(h^{2-\beta})$ when $\lambda=0$ and $\gamma=1+\frac{\beta}{2}$. Table \ref{tab:sC2r05g1a05} shows that the convergence rate is also $O(h^{2-\beta})$ when $\lambda=0.5$ and $\gamma=1+\frac{\beta}{2}$. The results show that $\lambda$ has no effect on the convergence rates  when $\gamma=1+\frac{\beta}{2}$.
% Table generated by Excel2LaTeX from sheet 'Sheet1'
\begin{table}[h]\fontsize{6pt}{10pt}\selectfont%生成浮动表格
 \begin{center}%\def\tabcolsep{28.5pt}%表格居中
  \caption {Errors and convergence orders of $(\Delta+\lambda)^{\beta/2}(1-x^2)^3(1-y^2)^3=f$ with $\lambda=0$ and $\gamma=1+\frac{\beta}{2}$} \vspace{5pt}% 标题，离表格一定的距离
\begin{tabular*}{\linewidth}{@{\extracolsep{\fill}}*{10}{c}}                                    \hline  %画顶端的横线
$\beta\backslash h$&  &      1/8   &      1/16  &      1/32  &      1/64  &      1/128 &      1/256 \\
\hline
           &$L_\infty$            & 1.2775E-02 & 5.0666E-03 & 1.9088E-03 & 7.0207E-04 & 2.5478E-04 & 9.1692E-05 \\

       0.5 &Rate            &            &    1.3343  &    1.4083  &    1.4430  &    1.4624  &    1.4744  \\

           &$L_2$            & 7.4608E-03 & 2.9768E-03 & 1.1253E-03 & 4.1463E-04 & 1.5061E-04 & 5.4231E-05 \\

           &Rate            &            &    1.3256  &    1.4034  &    1.4405  &    1.4610  &    1.4736  \\

           &$L_\infty$            & 3.0819E-02 & 1.4692E-02 & 6.6674E-03 & 2.9629E-03 & 1.3037E-03 & 5.7077E-04 \\

       0.8 &Rate            &            &    1.0687  &    1.1399  &    1.1701  &    1.1844  &    1.1916  \\

           &$L_2$            & 1.8187E-02 & 8.6468E-03 & 3.9240E-03 & 1.7442E-03 & 7.6760E-04 & 3.3610E-04 \\

           &Rate            &            &    1.0727  &    1.1398  &    1.1698  &    1.1842  &    1.1915  \\

           &$L_\infty$            & 7.9801E-02 & 4.9731E-02 & 2.9660E-02 & 1.7359E-02 & 1.0070E-02 & 5.8144E-03 \\

       1.2 &Rate            &            &    0.6822  &    0.7456  &    0.7728  &    0.7857  &    0.7923  \\

           &$L_2$            & 4.9372E-02 & 3.0390E-02 & 1.8037E-02 & 1.0534E-02 & 6.1039E-03 & 3.5226E-03 \\

           &Rate            &            &    0.7001  &    0.7526  &    0.7759  &    0.7873  &    0.7931  \\

           &$L_\infty$            & 1.4604E-01 & 1.1180E-01 & 8.2591E-02 & 6.0044E-02 & 4.3266E-02 & 3.1000E-02 \\

       1.5 &Rate            &            &    0.3854  &    0.4369  &    0.4600  &    0.4728  &    0.4810  \\

           &$L_2$            & 9.4657E-02 & 7.1555E-02 & 5.2579E-02 & 3.8120E-02 & 2.7422E-02 & 1.9626E-02 \\

           &Rate            &            &    0.4037  &    0.4446  &    0.4639  &    0.4752  &    0.4826  \\
\hline
    \end{tabular*}\label{tab:sC2r0g1a05}%\vspace{-15pt}
  \end{center}
\end{table}

% Table generated by Excel2LaTeX from sheet 'Sheet1'
% Table generated by Excel2LaTeX from sheet 'Sheet1'
\begin{table}[h]\fontsize{6pt}{10pt}\selectfont%生成浮动表格
 \begin{center}%\def\tabcolsep{28.5pt}%表格居中
  \caption {Errors and convergence orders of $(\Delta+\lambda)^{\beta/2}(1-x^2)^3(1-y^2)^3=f$ with $\lambda=0.5$ and $\gamma=1+\frac{\beta}{2}$} \vspace{5pt}% 标题，离表格一定的距离
\begin{tabular*}{\linewidth}{@{\extracolsep{\fill}}*{10}{c}}                                    \hline  %画顶端的横线
$\beta\backslash h$         &            &      1/8   &      1/16  &      1/32  &      1/64  &      1/128 &      1/256 \\
\hline
           &$L_\infty$            & 2.4304E-02 & 1.0338E-02 & 4.0423E-03 & 1.5182E-03 & 5.5784E-04 & 2.0232E-04 \\

       0.5 &Rate            &            &    1.2332  &    1.3547  &    1.4128  &    1.4444  &    1.4632  \\

           &$L_2$           & 1.4618E-02 & 6.1735E-03 & 2.4088E-03 & 9.0392E-04 & 3.3199E-04 & 1.2037E-04 \\

           &Rate            &            &    1.2436  &    1.3578  &    1.4140  &    1.4451  &    1.4636  \\

           &$L_\infty$            & 4.5025E-02 & 2.3185E-02 & 1.0960E-02 & 4.9802E-03 & 2.2183E-03 & 9.7786E-04 \\

       0.8 &Rate            &            &    0.9576  &    1.0809  &    1.1380  &    1.1667  &    1.1818  \\

           &$L_2$            & 2.7775E-02 & 1.4132E-02 & 6.6526E-03 & 3.0174E-03 & 1.3429E-03 & 5.9175E-04 \\

           &Rate            &            &    0.9748  &    1.0870  &    1.1406  &    1.1679  &    1.1823  \\

           &$L_\infty$            & 9.4091E-02 & 6.3722E-02 & 3.9735E-02 & 2.3827E-02 & 1.4004E-02 & 8.1432E-03 \\

       1.2 &Rate            &            &    0.5623  &    0.6814  &    0.7378  &    0.7668  &    0.7821  \\

           &$L_2$            & 6.0576E-02 & 4.0453E-02 & 2.5086E-02 & 1.5004E-02 & 8.8063E-03 & 5.1173E-03 \\

           &Rate            &            &    0.5825  &    0.6894  &    0.7415  &    0.7687  &    0.7832  \\

           &$L_\infty$            & 1.5112E-01 & 1.2580E-01 & 9.7124E-02 & 7.2367E-02 & 5.2879E-02 & 3.8195E-02 \\

       1.5 &Rate            &            &    0.2645  &    0.3732  &    0.4245  &    0.4526  &    0.4693  \\

           &$L_2$            & 1.0043E-01 & 8.2556E-02 & 6.3448E-02 & 4.7169E-02 & 3.4420E-02 & 2.4840E-02 \\

           &Rate            &            &    0.2827  &    0.3798  &    0.4277  &    0.4546  &    0.4706  \\
\hline
    \end{tabular*}\label{tab:sC2r05g1a05}%\vspace{-15pt}
  \end{center}
\end{table}

But when choosing $\lambda=0$ and $\gamma=2$, the convergence rates showed in Table \ref{tab:sC2r0g2} are higher than theoretical convergence rates; for any $\beta\in (0,2)$, the convergence rate is $O(h^2)$. For $\lambda>0$, the convergence rates showed in Table \ref{tab:sC2r05g2} depend on $\beta$; when $\beta<1$, the convergence rate is $O(h^2)$, and $\beta>1$ the convergence rate is $O(h^{3-\beta})$. This phenomenon indicates that the provided scheme works very well for the equation (\ref{defequ1}) when $\gamma=2$.
% Table generated by Excel2LaTeX from sheet 'Sheet1'
\begin{table}[h]\fontsize{6pt}{10pt}\selectfont%生成浮动表格
 \begin{center}%\def\tabcolsep{28.5pt}%表格居中
  \caption {Errors and convergence orders of $(\Delta+\lambda)^{\beta/2}(1-x^2)^3(1-y^2)^3=f$ with $\lambda=0$ and $\gamma=2$} \vspace{5pt}% 标题，离表格一定的距离
\begin{tabular*}{\linewidth}{@{\extracolsep{\fill}}*{10}{c}}                                    \hline  %画顶端的横线
$\beta\backslash h$&            &      1/8   &      1/16  &      1/32  &      1/64  &      1/128 &      1/256 \\
\hline
           &$L_\infty$            & 9.1029E-04 & 1.9637E-04 & 4.6351E-05 & 1.1347E-05 & 2.8156E-06 & 7.0180E-07 \\

       0.5 &Rate            &            &    2.2128  &    2.0829  &    2.0302  &    2.0109  &    2.0043  \\

           &$L_2$            & 5.7429E-04 & 1.2021E-04 & 2.7950E-05 & 6.8027E-06 & 1.6843E-06 & 4.1961E-07 \\

           &Rate            &            &    2.2562  &    2.1046  &    2.0387  &    2.0140  &    2.0050  \\

           &$L_\infty$            & 1.8685E-03 & 3.9361E-04 & 9.0560E-05 & 2.1800E-05 & 5.3592E-06 & 1.3295E-06 \\

       0.8 &Rate            &            &    2.2471  &    2.1198  &    2.0545  &    2.0242  &    2.0111  \\

           &$L_2$            & 1.2028E-03 & 2.4720E-04 & 5.5768E-05 & 1.3284E-05 & 3.2495E-06 & 8.0453E-07 \\

           &Rate           &            &    2.2827  &    2.1481  &    2.0697  &    2.0314  &    2.0140  \\

           &$L_\infty$            & 3.8160E-03 & 7.8100E-04 & 1.7136E-04 & 3.9489E-05 & 9.3971E-06 & 2.2808E-06 \\

       1.2 &Rate            &            &    2.2887  &    2.1883  &    2.1175  &    2.0712  &    2.0427  \\

           &$L_2$            & 2.4861E-03 & 5.0566E-04 & 1.0904E-04 & 2.4736E-05 & 5.8223E-06 & 1.4037E-06 \\

           &Rate            &            &    2.2977  &    2.2133  &    2.1401  &    2.0870  &    2.0523  \\

           &$L_\infty$            & 6.3257E-03 & 1.3143E-03 & 2.8396E-04 & 6.3320E-05 & 1.4493E-05 & 3.3880E-06 \\

       1.5 &Rate            &            &    2.2670  &    2.2105  &    2.1649  &    2.1273  &    2.0968  \\

           &$L_2$            & 4.0732E-03 & 8.5417E-04 & 1.8368E-04 & 4.0561E-05 & 9.1888E-06 & 2.1290E-06 \\

           &Rate            &            &    2.2536  &    2.2174  &    2.1790  &    2.1422  &    2.1097  \\
\hline
    \end{tabular*}\label{tab:sC2r0g2}%\vspace{-15pt}
  \end{center}
\end{table}

% Table generated by Excel2LaTeX from sheet 'Sheet1'
\begin{table}[h]\fontsize{6pt}{10pt}\selectfont%生成浮动表格
 \begin{center}%\def\tabcolsep{28.5pt}%表格居中
  \caption {Errors and convergence orders of $(\Delta+\lambda)^{\beta/2}(1-x^2)^3(1-y^2)^3=f$ with $\lambda=0.5$ and $\gamma=2$} \vspace{5pt}% 标题，离表格一定的距离
\begin{tabular*}{\linewidth}{@{\extracolsep{\fill}}*{10}{c}}                                    \hline  %画顶端的横线
$\beta\backslash h$&            &      1/8   &      1/16  &      1/32  &      1/64  &      1/128 &      1/256 \\
\hline
           &$L_\infty$            & 3.2465E-03 & 6.9337E-04 & 1.5697E-04 & 3.6753E-05 & 8.7832E-06 & 2.1265E-06 \\

       0.5 &Rate            &            &    2.2272  &    2.1431  &    2.0946  &    2.0650  &    2.0463  \\

           &$L_2$             & 1.9408E-03 & 4.1294E-04 & 9.3343E-05 & 2.1844E-05 & 5.2193E-06 & 1.2634E-06 \\

           & Rate           &            &    2.2326  &    2.1453  &    2.0953  &    2.0653  &    2.0466  \\

           &$L_\infty$            & 6.3390E-03 & 1.4709E-03 & 3.5361E-04 & 8.6471E-05 & 2.1319E-05 & 5.2764E-06 \\

       0.8 &Rate            &            &    2.1076  &    2.0564  &    2.0319  &    2.0201  &    2.0145  \\

           &$L_2$             & 3.7839E-03 & 8.7736E-04 & 2.1096E-04 & 5.1602E-05 & 1.2724E-05 & 3.1489E-06 \\

           &Rate            &            &    2.1086  &    2.0562  &    2.0315  &    2.0199  &    2.0146  \\

           &$L_\infty$            & 1.4822E-02 & 4.0632E-03 & 1.1494E-03 & 3.2981E-04 & 9.5204E-05 & 2.7540E-05 \\

       1.2 &Rate            &            &    1.8670  &    1.8217  &    1.8012  &    1.7925  &    1.7895  \\

           &$L_2$             & 8.9250E-03 & 2.4663E-03 & 7.0290E-04 & 2.0284E-04 & 5.8789E-05 & 1.7056E-05 \\

           &Rate            &            &    1.8555  &    1.8110  &    1.7930  &    1.7867  &    1.7853  \\

           &$L_\infty$            & 2.8458E-02 & 9.2196E-03 & 3.1122E-03 & 1.0731E-03 & 3.7417E-04 & 1.3125E-04 \\

       1.5 &Rate            &            &    1.6261  &    1.5668  &    1.5361  &    1.5200  &    1.5114  \\

           &$L_2$             & 1.7457E-02 & 5.7510E-03 & 1.9678E-03 & 6.8487E-04 & 2.4023E-04 & 8.4579E-05 \\

           &Rate            &            &    1.6019  &    1.5472  &    1.5227  &    1.5114  &    1.5060  \\
\hline
    \end{tabular*}\label{tab:sC2r05g2}%\vspace{-15pt}
  \end{center}
\end{table}

 Next, we give Figures \ref{fig:a05r0} and \ref{fig:a05r05} to show the influence of different $\gamma$ on the convergence rates. Figure \ref{fig:a05r0} shows that the convergence rate is almost $O(h^{2-\beta})$ except $\gamma=2$  when $\beta=0.5$ and $\lambda=0$; for the same mesh size $h$, the numerical errors become smaller as the parameter $\gamma$ increases. We can get the same results from Figure \ref{fig:a05r05} when $\beta=0.5$ and $\lambda=0.5$. Comparing Figure \ref{fig:a05r0} with \ref{fig:a05r05}, it's easy to note that $\gamma$ has the same influence on the convergence rates for any $\lambda$.
\begin{figure}[h]
  \begin{center}
  % Requires \usepackage{graphicx}
  \caption{$L_2$ errors and convergence orders for the system with different $\gamma$ when $\beta=0.5$ and $\lambda=0$}\label{fig:a05r0}
  \includegraphics[width=13.66cm,height=6cm,angle=0]{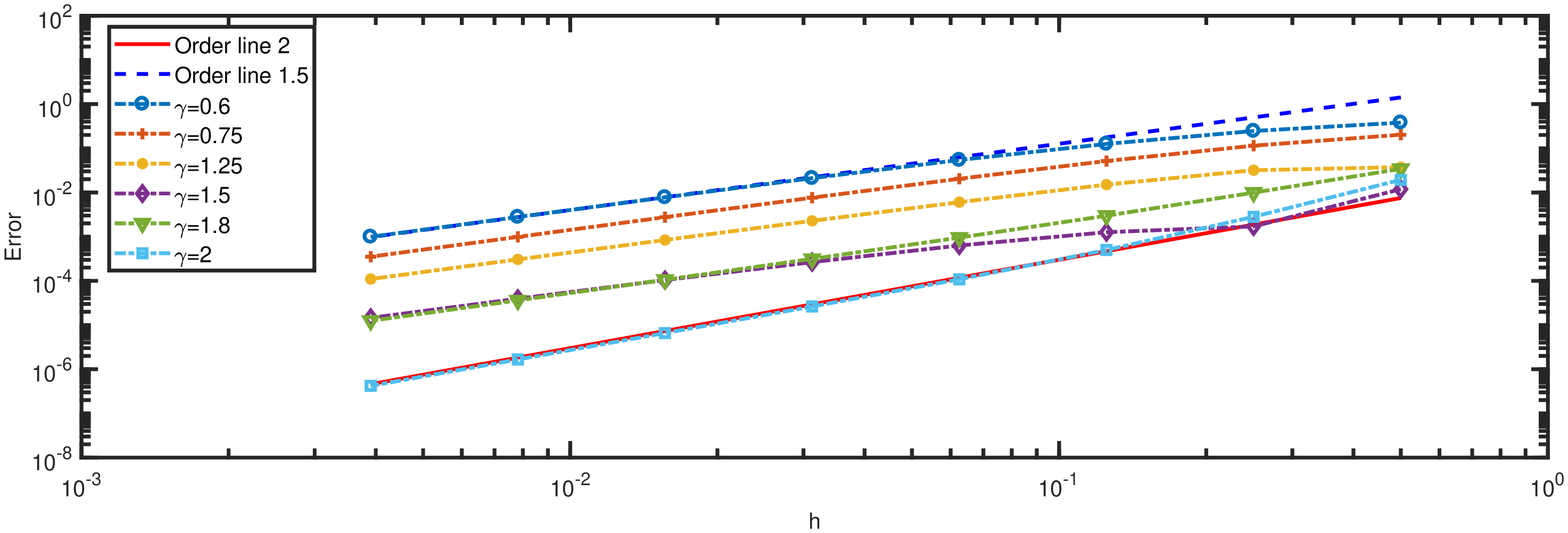}
  \end{center}
\end{figure}
\begin{figure}[h]
  \begin{center}
  % Requires \usepackage{graphicx}
  \caption{$L_2$ errors and convergence orders for the system with different $\gamma$ when $\beta=0.5$ and $\lambda=0.5$}\label{fig:a05r05}
  \includegraphics[width=13.66cm,height=6cm,angle=0]{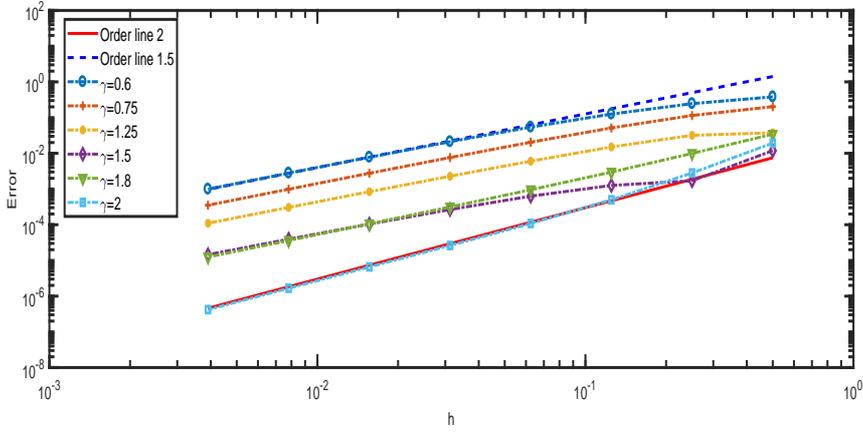}
  \end{center}
\end{figure}
\end{example}

Afterwards, we solve (\ref{defequ1}) with the exact solution $u=(1-x^2)^2(1-y^2)^2$, which has a lower regularity than one in Example 2.
\begin{example}
Taking the exact solution $u=(1-x^2)^2(1-y^2)^2$, the convergence rates are shown in Tables \ref{tab:sC1r0g1a05} and \ref{tab:sC1r05g1a05} with different $\beta$ and $\lambda$. It is easy to check that $u$ and $Du$ are continuous in $\mathbb{R}^2$, but $\frac{\partial^2 u}{\partial x^2}$ and $\frac{\partial^2 u}{\partial y^2}$ are discontinuous at the boundary of $\Omega$, so $u\in C^1(\mathbb{R}^2)$ and the second derivatives of $u$ are bounded. It can be noted that the provided scheme has the same convergence rates for $u\in C^2(\mathbb{R}^2)$ and $C^1(\mathbb{R}^2)$ with second bounded derivatives.
%We find it also has the same convergence rates to  $u\in C^2(\mathbb{R}^2)$, so our method is the same with a function which is in $C^1(\mathbb{R}^2)$ with second derivatives bounded.
\begin{table}[h]\fontsize{6pt}{10pt}\selectfont%生成浮动表格
 \begin{center}%\def\tabcolsep{28.5pt}%表格居中
  \caption {Errors and convergence orders of $(\Delta+\lambda)^{\beta/2}(1-x^2)^2(1-y^2)^2=f$ with $\lambda=0$ and $\gamma=1+\frac{\beta}{2}$} \vspace{5pt}% 标题，离表格一定的距离
\begin{tabular*}{\linewidth}{@{\extracolsep{\fill}}*{10}{c}}                                    \hline  %画顶端的横线
 $\beta\backslash h$          &            &      1/8   &      1/16  &      1/32  &      1/64  &      1/128 &      1/256 \\
\hline
           &$L_\infty$  & 1.0158E-02 & 3.9349E-03 & 1.4657E-03 & 5.3567E-04 & 1.9365E-04 & 6.9522E-05 \\

       0.5 &Rate        &            &    1.3682  &    1.4247  &    1.4522  &    1.4679  &    1.4779  \\

           &$L_2$       & 7.4130E-03 & 2.9212E-03 & 1.1009E-03 & 4.0553E-04 & 1.4735E-04 & 5.3072E-05 \\

           &Rate        &            &    1.3435  &    1.4078  &    1.4409  &    1.4606  &    1.4732  \\

           &$L_\infty$  & 2.6110E-02 & 1.2198E-02 & 5.4804E-03 & 2.4227E-03 & 1.0630E-03 & 4.6469E-04 \\

       0.8 &Rate        &            &    1.0980  &    1.1543  &    1.1777  &    1.1885  &    1.1938  \\

           &$L_2$       & 1.9157E-02 & 8.9819E-03 & 4.0460E-03 & 1.7920E-03 & 7.8730E-04 & 3.4443E-04 \\

           &Rate        &            &    1.0928  &    1.1505  &    1.1749  &    1.1866  &    1.1927  \\

           &$L_\infty$  & 7.2876E-02 & 4.4876E-02 & 2.6588E-02 & 1.5503E-02 & 8.9735E-03 & 5.1752E-03 \\

       1.2 &Rate        &            &    0.6995  &    0.7552  &    0.7782  &    0.7888  &    0.7940  \\

           &$L_2$       & 5.5491E-02 & 3.4120E-02 & 2.0173E-02 & 1.1745E-02 & 6.7921E-03 & 3.9151E-03 \\

           &Rate        &            &    0.7016  &    0.7582  &    0.7804  &    0.7901  &    0.7948  \\

           &$L_\infty$  & 1.3927E-01 & 1.0611E-01 & 7.8128E-02 & 5.6668E-02 & 4.0767E-02 & 2.9176E-02 \\

       1.5 &Rate        &            &    0.3923  &    0.4417  &    0.4633  &    0.4751  &    0.4826  \\

           &$L_2$       & 1.0921E-01 & 8.3270E-02 & 6.1236E-02 & 4.4359E-02 & 3.1880E-02 & 2.2799E-02 \\

           &Rate        &            &    0.3912  &    0.4434  &    0.4651  &    0.4766  &    0.4837  \\
\hline
\end{tabular*}\label{tab:sC1r0g1a05}%\vspace{-15pt}
  \end{center}
\end{table}

\begin{table}[h]\fontsize{6pt}{10pt}\selectfont%生成浮动表格
 \begin{center}%\def\tabcolsep{28.5pt}%表格居中
  \caption {Errors and convergence orders of $(\Delta+\lambda)^{\beta/2}(1-x^2)^2(1-y^2)^2=f$ with $\lambda=0.5$ and $\gamma=1+\frac{\beta}{2}$} \vspace{5pt}% 标题，离表格一定的距离
\begin{tabular*}{\linewidth}{@{\extracolsep{\fill}}*{10}{c}}                                    \hline  %画顶端的横线

  $\beta\backslash h$         &            &      1/8   &      1/16  &      1/32  &      1/64  &      1/128 &      1/256 \\
\hline
           &$L_\infty$            & 2.1303E-02 & 8.9076E-03 & 3.4504E-03 & 1.2888E-03 & 4.7199E-04 & 1.7083E-04 \\

       0.5 &Rate            &            &    1.2579  &    1.3683  &    1.4207  &    1.4492  &    1.4662  \\

           &$L_2$            & 1.5991E-02 & 6.6931E-03 & 2.5932E-03 & 9.6895E-04 & 3.5498E-04 & 1.2852E-04 \\

           &Rate            &            &    1.2565  &    1.3679  &    1.4202  &    1.4487  &    1.4657  \\

           &$L_\infty$            & 4.0896E-02 & 2.0785E-02 & 9.7538E-03 & 4.4132E-03 & 1.9611E-03 & 8.6335E-04 \\

       0.8 &Rate            &            &    0.9764  &    1.0915  &    1.1441  &    1.1702  &    1.1837  \\

           &$L_2$            & 3.1237E-02 & 1.5861E-02 & 7.4294E-03 & 3.3575E-03 & 1.4910E-03 & 6.5615E-04 \\

           &Rate            &            &    0.9778  &    1.0941  &    1.1459  &    1.1711  &    1.1842  \\

           &$L_\infty$            & 8.9126E-02 & 5.9987E-02 & 3.7250E-02 & 2.2277E-02 & 1.3072E-02 & 7.5938E-03 \\

       1.2 &Rate            &            &    0.5712  &    0.6874  &    0.7417  &    0.7692  &    0.7835  \\

           &$L_2$            & 6.9802E-02 & 4.7008E-02 & 2.9145E-02 & 1.7406E-02 & 1.0203E-02 & 5.9241E-03 \\

           &Rate            &            &    0.5704  &    0.6897  &    0.7437  &    0.7705  &    0.7844  \\

           &$L_\infty$            & 1.4667E-01 & 1.2195E-01 & 9.3995E-02 & 6.9937E-02 & 5.1048E-02 & 3.6843E-02 \\

       1.5 &Rate            &            &    0.2663  &    0.3756  &    0.4265  &    0.4542  &    0.4705  \\

           &$L_2$            & 1.1665E-01 & 9.7273E-02 & 7.4957E-02 & 5.5732E-02 & 4.0654E-02 & 2.9327E-02 \\

           &Rate            &            &    0.2621  &    0.3760  &    0.4276  &    0.4551  &    0.4712  \\
\hline
\end{tabular*}\label{tab:sC1r05g1a05}%\vspace{-15pt}
  \end{center}
\end{table}

\end{example}

Finally, we use the provided scheme to solve (\ref{defequ1}) with smooth right hand term.
\begin{example}\label{example4}
We consider the model (\ref{defequ1}) in $\Omega$ with the source term $f=1$. Here 
\begin{equation}
{\rm rate}=\frac{\ln(e_{2h}/{e_h})}{\ln(2)}
\end{equation}
is utilized to measure the convergence rates, where $u_h$ means the numerical solution under mesh size $h$ and $e_h=\|u_{2h}-u_{h}\|$.

Tables $\ref{tab:F1r0g1a05}$ and $\ref{tab:F1r05g2}$ show the numerical errors and the convergence rates with $\lambda=0$, $\gamma=1+\frac{\beta}{2}$ and $\lambda=0.5$, $\gamma=2$, respectively. The convergence rates are lower than desired ones because of the regularity of the exact solution $u$. 
%This phenomenon may be caused by the regularity of exact solution $u$.
These results are similar to the ones in one dimension \cite{Zhang2017}.
\begin{table}[h]\fontsize{6pt}{10pt}\selectfont%生成浮动表格
 \begin{center}%\def\tabcolsep{28.5pt}%表格居中
  \caption {Errors and convergence orders of $(\Delta+\lambda)^{\beta/2}u=1$ with $\lambda=0$ and $\gamma=1+\frac{\beta}{2}$} \vspace{5pt}% 标题，离表格一定的距离
\begin{tabular*}{\linewidth}{@{\extracolsep{\fill}}*{10}{c}}
\hline
  $\beta$         &            &   1/16-1/8 &  1/32-1/16 &  1/64-1/32 & 1/128-1/64 & 1/256-1/128 \\
\hline
           &$L_\infty$            & 5.5241E-02 & 4.2765E-02 & 3.4306E-02 & 2.8808E-02 & 2.4210E-02 \\

       0.5 &Rate            &            &    0.3693  &    0.3180  &    0.2520  &    0.2509  \\

           &$L_2$            & 2.8760E-02 & 1.7970E-02 & 1.0996E-02 & 6.6540E-03 & 4.0008E-03 \\

           &Rate            &            &    0.6784  &    0.7086  &    0.7247  &    0.7339  \\

           &$L_\infty$            & 4.1983E-02 & 3.1363E-02 & 2.3612E-02 & 1.7837E-02 & 1.3496E-02 \\

       0.8 &Rate            &            &    0.4207  &    0.4095  &    0.4046  &    0.4023  \\

           &$L_2$            & 2.8257E-02 & 1.6889E-02 & 9.7784E-03 & 5.5543E-03 & 3.1158E-03 \\

           &Rate            &            &    0.7425  &    0.7884  &    0.8160  &    0.8340  \\

           &$L_\infty$            & 2.5049E-02 & 1.5905E-02 & 1.0213E-02 & 6.6579E-03 & 4.3582E-03 \\

       1.2 &Rate            &            &    0.6553  &    0.6391  &    0.6173  &    0.6113  \\

           &$L_2$            & 2.2415E-02 & 1.4148E-02 & 8.6240E-03 & 5.1498E-03 & 3.0366E-03 \\

           &Rate            &            &    0.6639  &    0.7142  &    0.7438  &    0.7621  \\

           &$L_\infty$            & 2.1610E-02 & 1.5623E-02 & 1.1253E-02 & 8.0821E-03 & 5.7890E-03 \\

       1.5 &Rate            &            &    0.4680  &    0.4733  &    0.4775  &    0.4814  \\

           &$L_2$            & 1.5619E-02 & 1.1405E-02 & 8.2246E-03 & 5.8995E-03 & 4.2199E-03 \\

           &Rate            &            &    0.4536  &    0.4717  &    0.4793  &    0.4834  \\
\hline
\end{tabular*}\label{tab:F1r0g1a05}%\vspace{-15pt}
  \end{center}
\end{table}

\begin{table}[h]\fontsize{6pt}{10pt}\selectfont%生成浮动表格
 \begin{center}%\def\tabcolsep{28.5pt}%表格居中
  \caption {The error of $(\Delta+\lambda)^{\beta/2}u=1$ with $\lambda=0.5$ and $\gamma=2$} \vspace{5pt}% 标题，离表格一定的距离
\begin{tabular*}{\linewidth}{@{\extracolsep{\fill}}*{10}{c}}  
\hline
  $\beta$         &            &   1/16-1/8 &  1/32-1/16 &  1/64-1/32 & 1/128-1/64 & 1/256-1/128 \\
\hline
           &$L_\infty$            & 1.2879E-01 & 9.6038E-02 & 7.2802E-02 & 5.6334E-02 & 4.4445E-02 \\

       0.5 &Rate            &            &    0.4233  &    0.3996  &    0.3700  &    0.3420  \\

           &$L_2$            & 8.7742E-02 & 5.2782E-02 & 3.0383E-02 & 1.7117E-02 & 9.5561E-03 \\

           &Rate            &            &    0.7332  &    0.7968  &    0.8279  &    0.8409  \\

           &$L_\infty$            & 6.2353E-02 & 4.5041E-02 & 3.2535E-02 & 2.3712E-02 & 1.7459E-02 \\

       0.8 &Rate            &            &    0.4692  &    0.4693  &    0.4563  &    0.4417  \\

           &$L_2$            & 4.3717E-02 & 2.6380E-02 & 1.5039E-02 & 8.3099E-03 & 4.5118E-03 \\

           &Rate            &            &    0.7288  &    0.8107  &    0.8558  &    0.8811  \\

           &$L_\infty$            & 2.1830E-02 & 1.5132E-02 & 1.0074E-02 & 6.6261E-03 & 4.3480E-03 \\

       1.2 &Rate            &            &    0.5287  &    0.5870  &    0.6044  &    0.6078  \\

           &$L_2$            & 1.5544E-02 & 9.7421E-03 & 5.6015E-03 & 3.0677E-03 & 1.6317E-03 \\

           &Rate            &            &    0.6740  &    0.7984  &    0.8687  &    0.9108  \\

           &$L_\infty$            & 7.1398E-03 & 5.2215E-03 & 3.3499E-03 & 2.0513E-03 & 1.2336E-03 \\

       1.5 &Rate            &            &    0.4514  &    0.6403  &    0.7076  &    0.7336  \\

           &$L_2$            & 4.2726E-03 & 3.1385E-03 & 1.9557E-03 & 1.1211E-03 & 6.1242E-04 \\

           &Rate            &            &    0.4450  &    0.6824  &    0.8028  &    0.8723  \\
\hline
\end{tabular*}\label{tab:F1r05g2}%\vspace{-15pt}
  \end{center}
\end{table}

In statistical physics \cite{Deng:17-2}, the solution $u$ of Example \ref{example4} represents the mean first exit time of a particle starting at $(x,y)$ away from given domain $\Omega$. Figure $\ref{figDepen}$ shows the dynamical behaviors when $\lambda=0,~0.5$ and $\beta=0.5,~0.8,~1.2,~1.5$; for any $\lambda$ and $\beta$, the mean first exit times of particles starting near the center are longer than the particles starting near the boundary of $\Omega$; for any fixed $\lambda$, the mean first exit time is shorter as $\beta$ increases; when exponentially tempering the isotropic power law measure of the jump length, the mean first exit time of any fixed starting point is longer than before.
  \begin{figure}
  \centering
  % Requires \usepackage{graphicx}
  \subfigure[$\beta=0.5$, $\lambda=0$]{
    \begin{minipage}{3.5cm}
    \centering %子图居中
    \includegraphics[width=3.5cm]{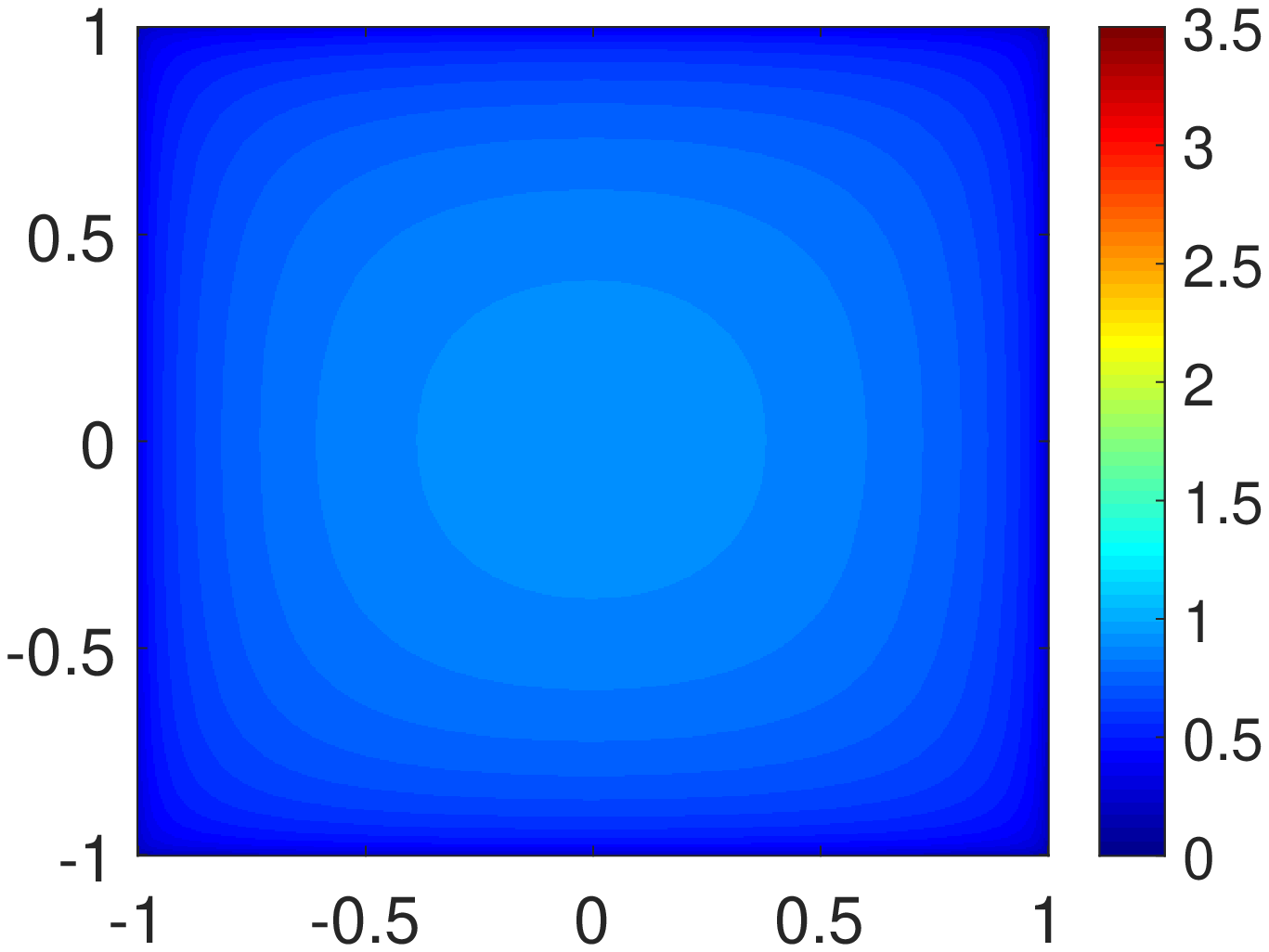}
    \end{minipage}
    }
    \subfigure[$\beta=0.8$, $\lambda=0$]{
    \begin{minipage}{3.5cm}
    \centering %子图居中
    \includegraphics[width=3.5cm]{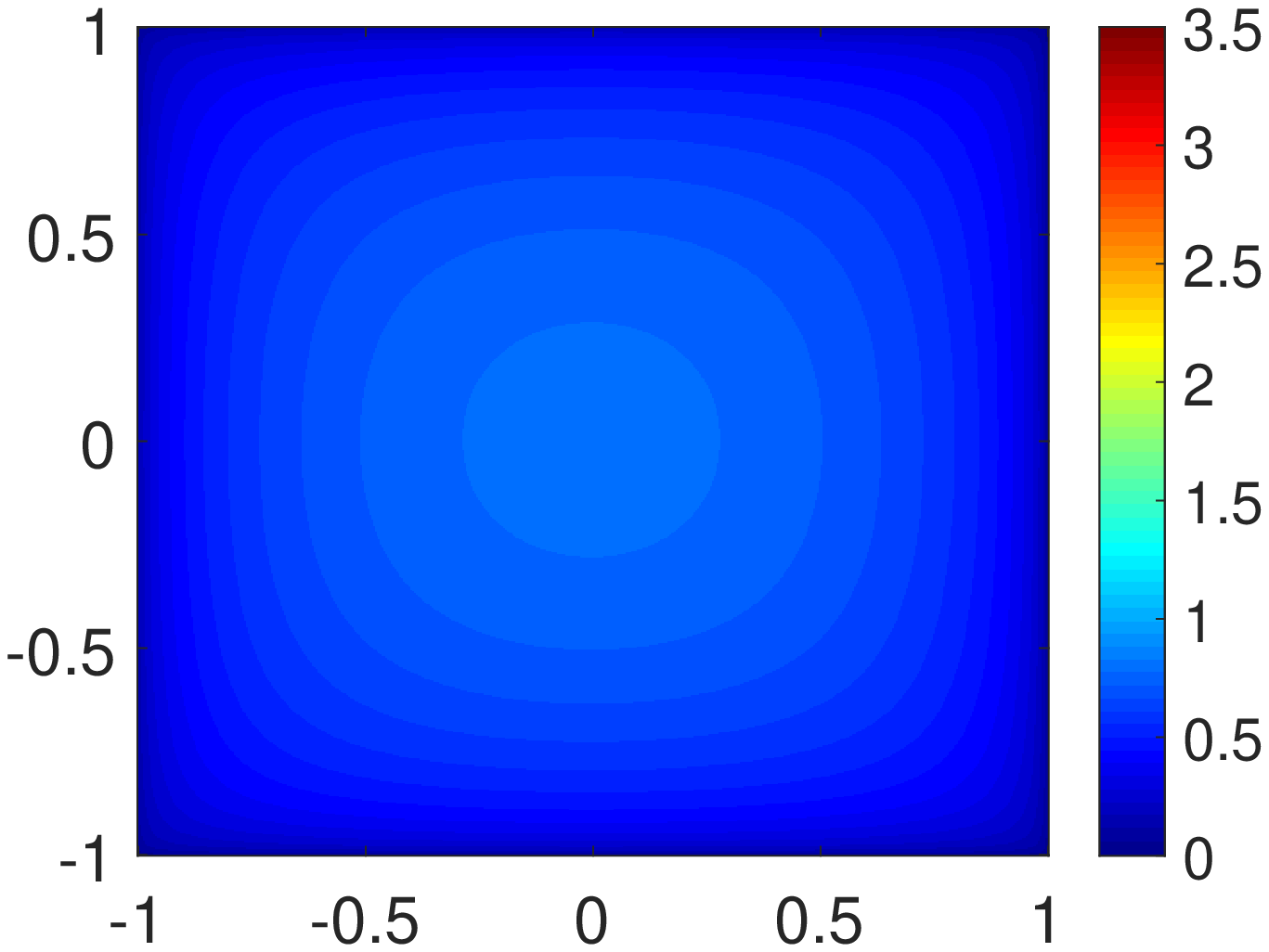}
    \end{minipage}
    }
      \subfigure[$\beta=1.2$, $\lambda=0$]{
    \begin{minipage}{3.5cm}
    \centering %子图居中
    \includegraphics[width=3.5cm]{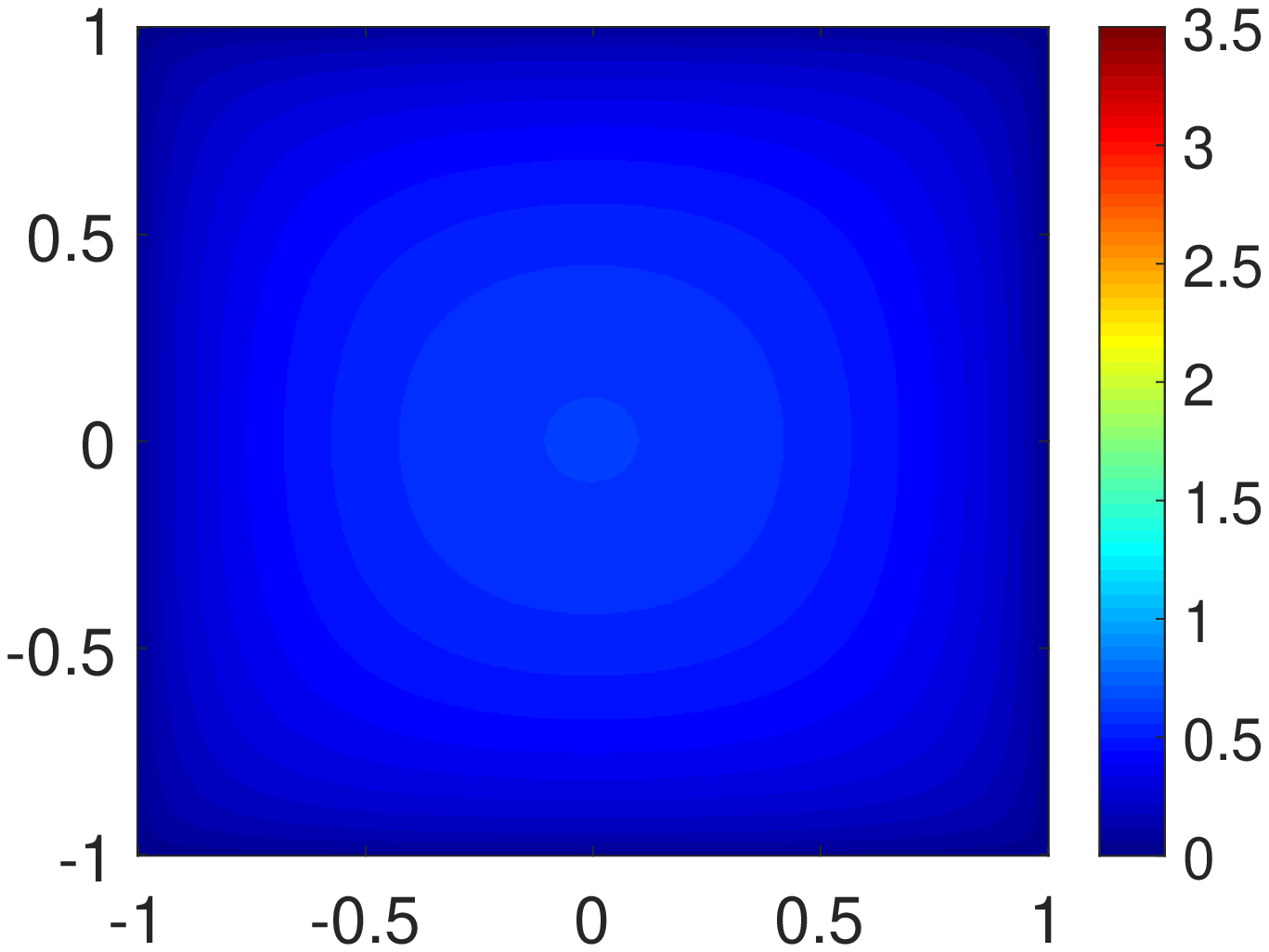}
    \end{minipage}
    }

     \subfigure[$\beta=0.5$, $\lambda=0.5$]{
    \begin{minipage}{3.5cm}
    \centering %子图居中
    \includegraphics[width=3.5cm]{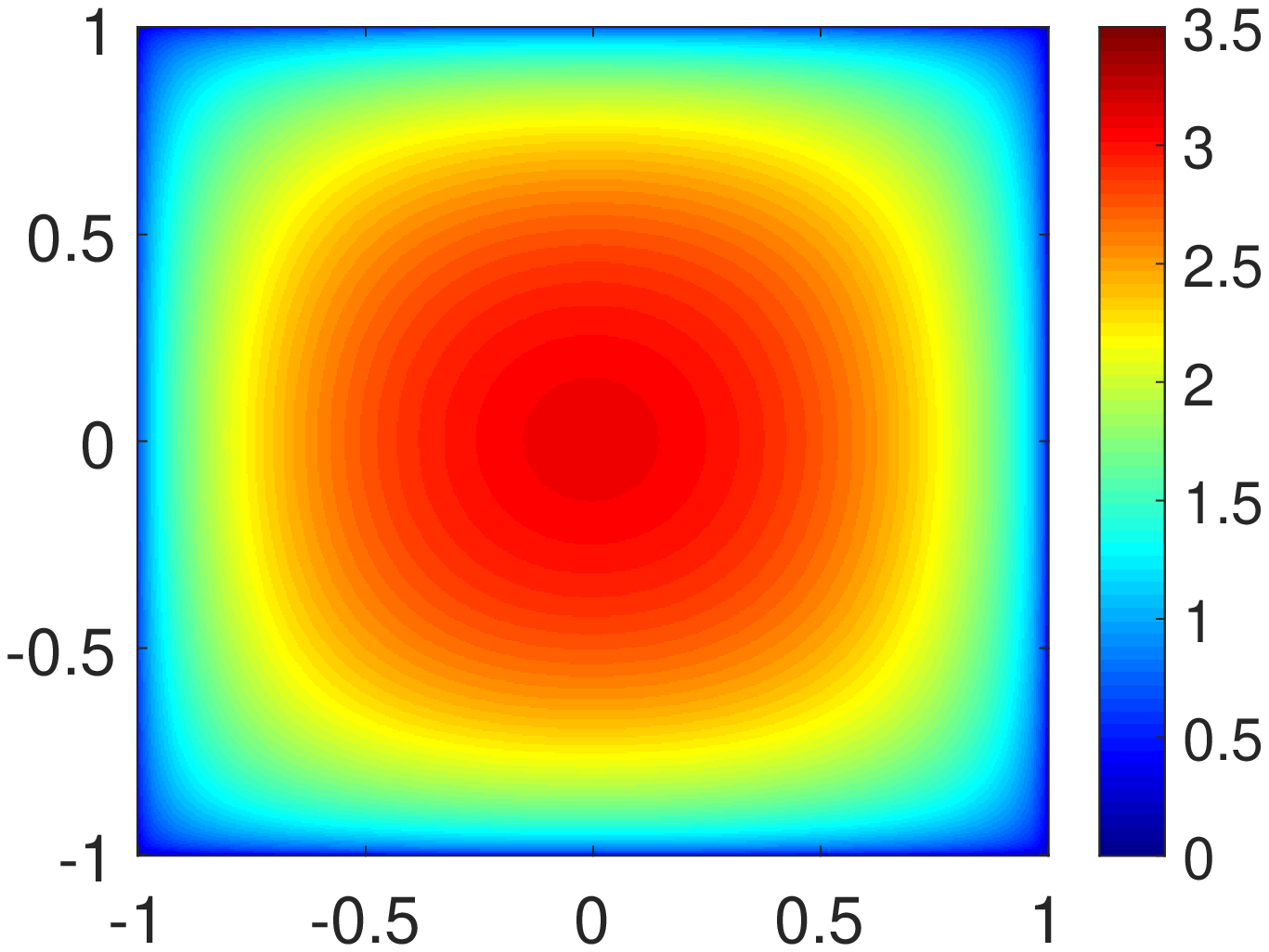}
    \end{minipage}
    }
    \subfigure[$\beta=0.8$, $\lambda=0.5$]{
    \begin{minipage}{3.5cm}
    \centering %子图居中
    \includegraphics[width=3.5cm]{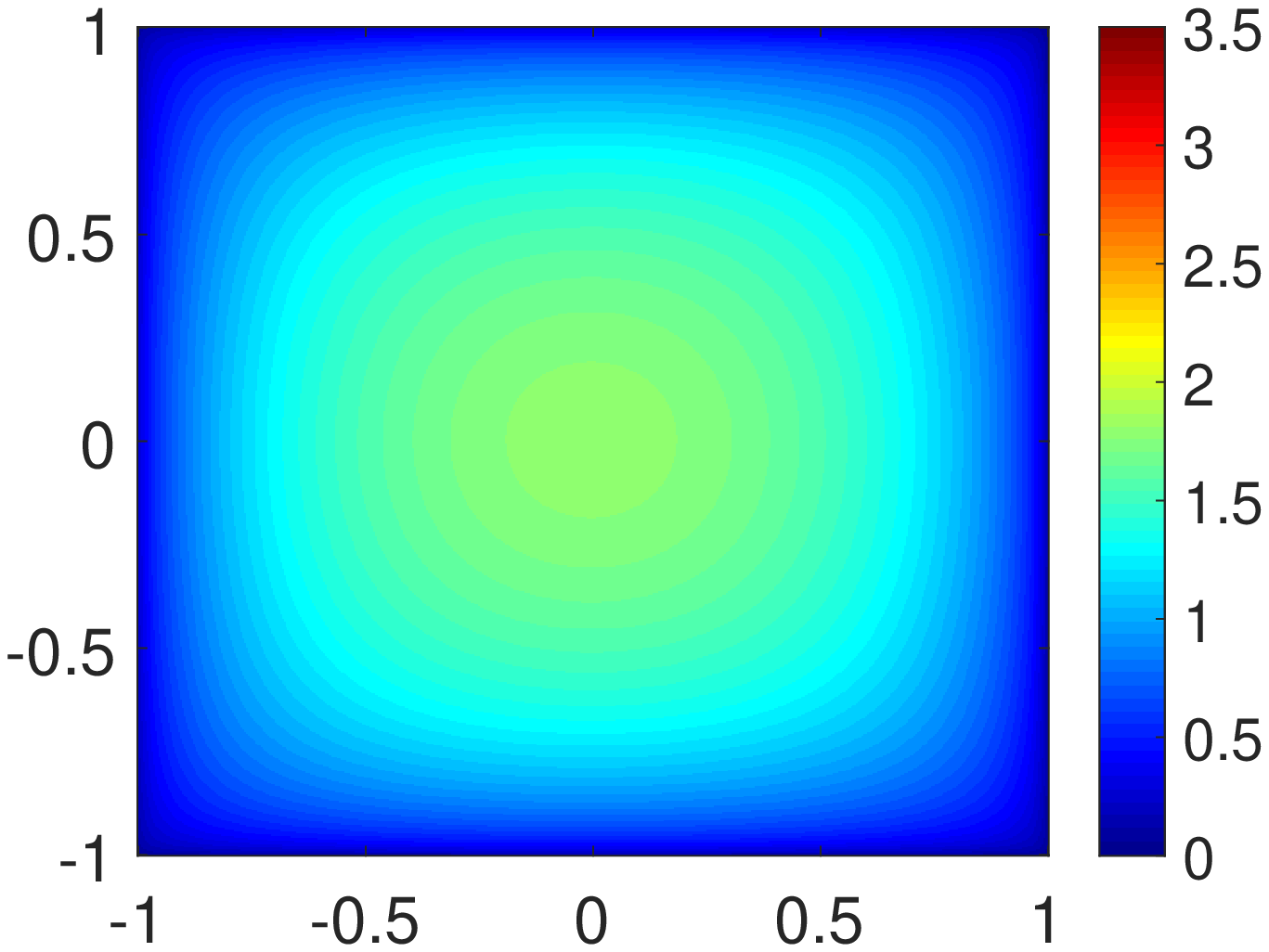}
    \end{minipage}
    }
      \subfigure[$\beta=1.2$, $\lambda=0.5$]{
    \begin{minipage}{3.5cm}
    \centering %子图居中
    \includegraphics[width=3.5cm]{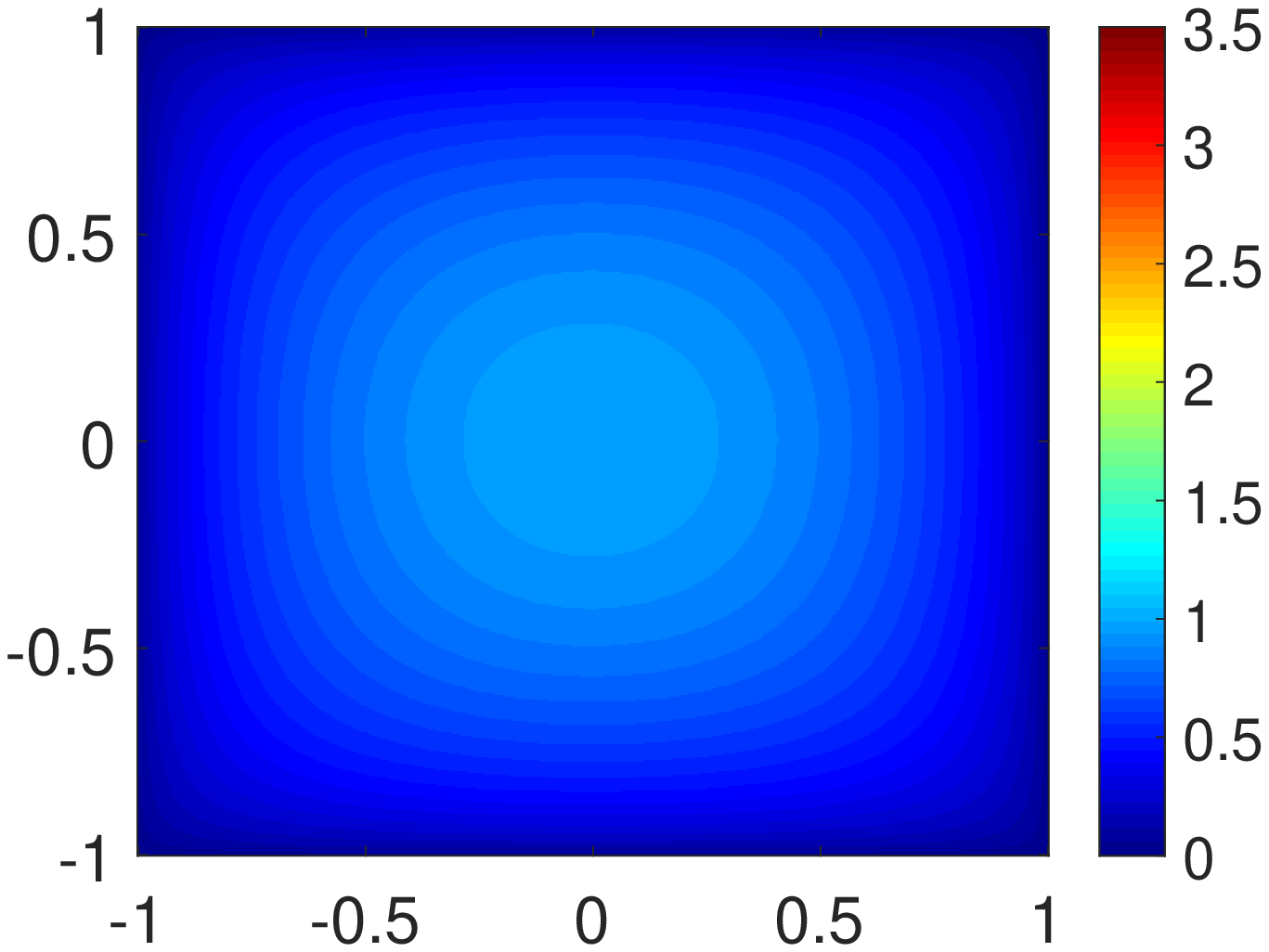}
    \end{minipage}
    }

    \subfigure[$\beta=1.5$, $\lambda=0$]{
    \begin{minipage}{3.5cm}
    \centering %子图居中
    \includegraphics[width=3.5cm]{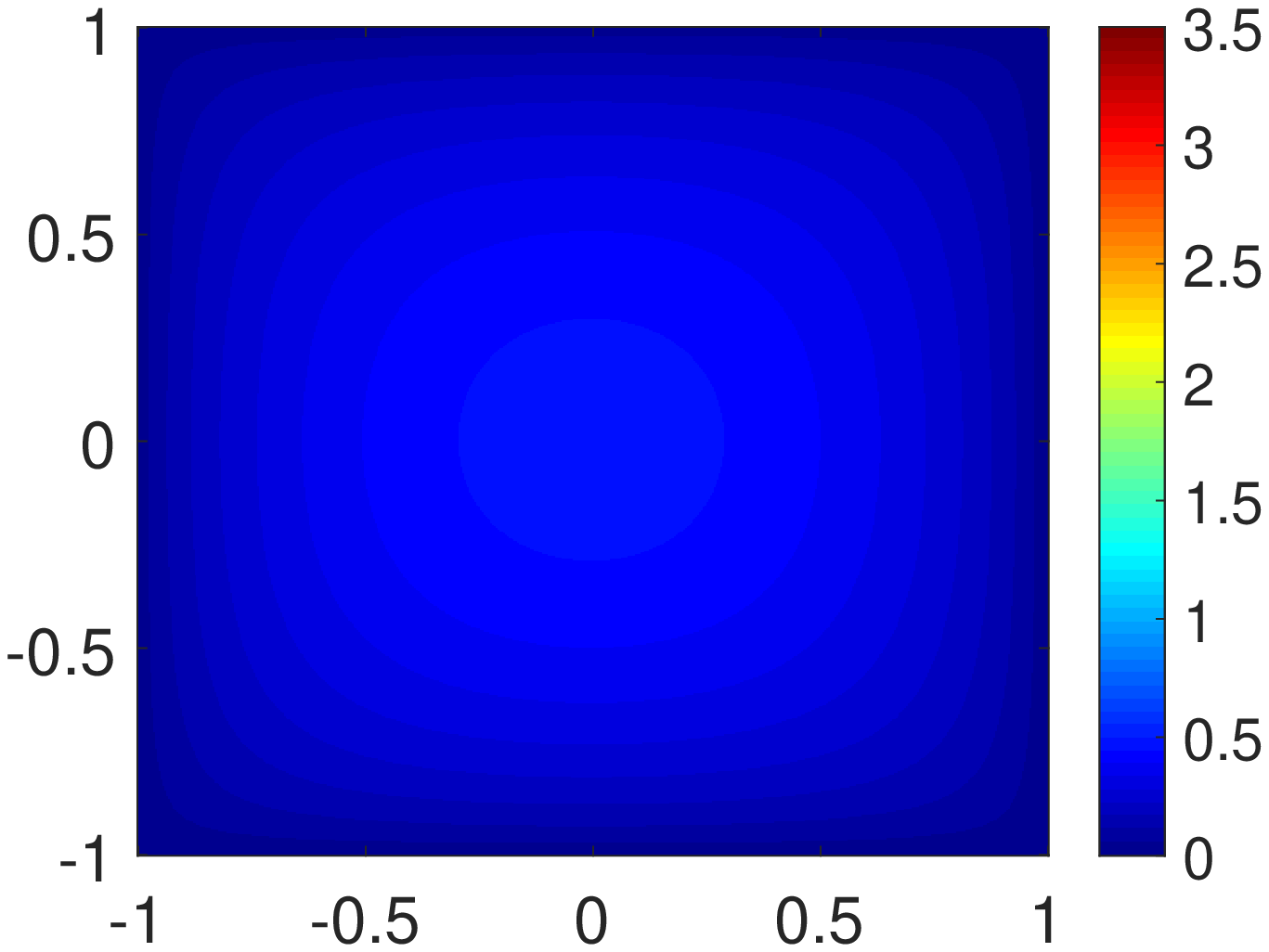}
    \end{minipage}
    }
      \subfigure[$\beta=1.5$, $\lambda=0.5$]{
    \begin{minipage}{3.5cm}
    \centering %子图居中
    \includegraphics[width=3.5cm]{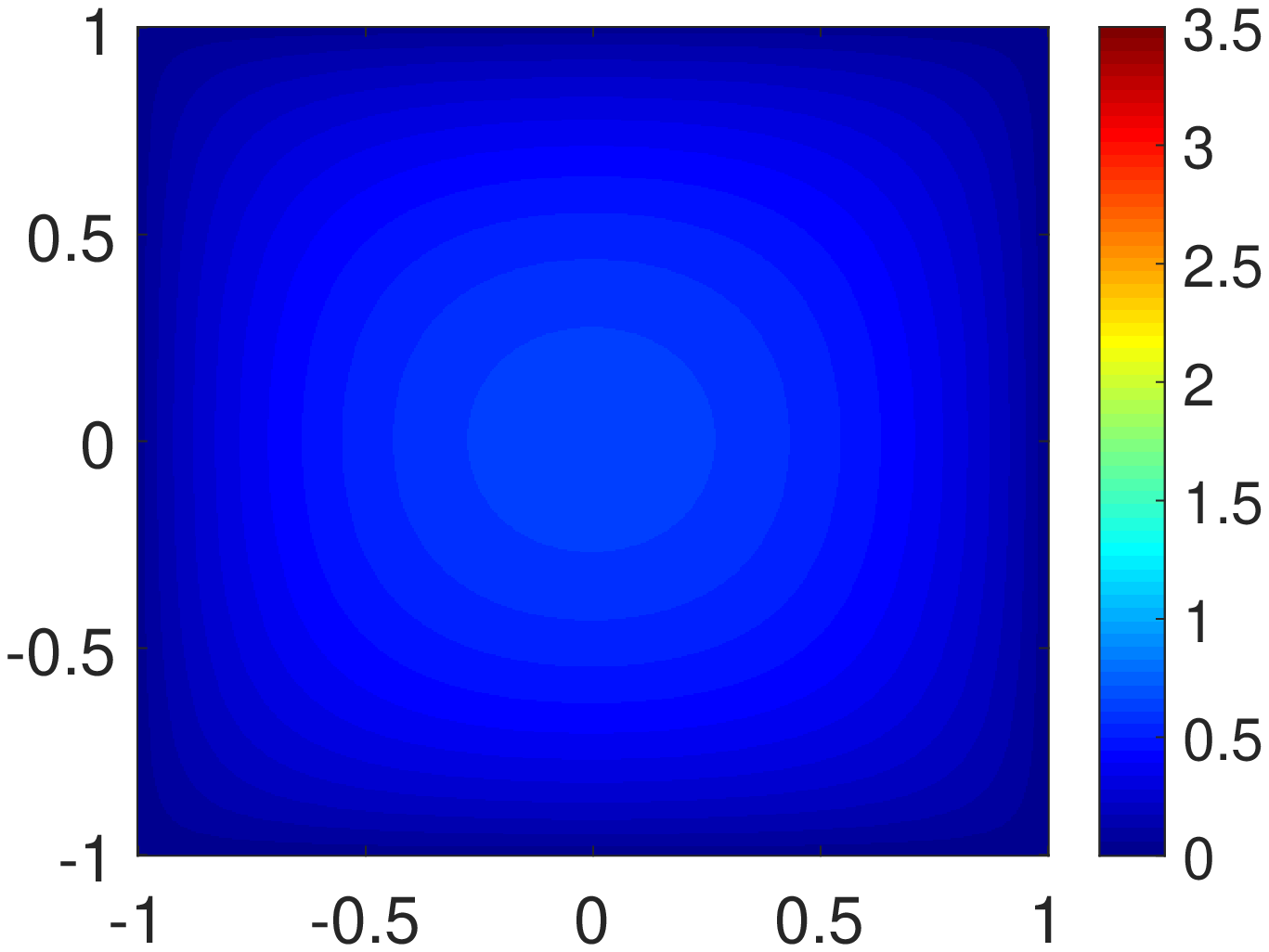}
    \end{minipage}
    }

  \centering\caption{Dependence of the mean exit time $u$ on $\beta$ and $\lambda$.}\label{figDepen}
\end{figure}
\end{example}
\section{Conclusion}

This paper provides the finite difference schemes for the two dimensional tempered fractional Laplacian, being physically introduced and mathematically defined in \cite{Deng:17}. The operator is written as the weighted integral of a weak singular function by introducing the auxiliary function $\phi_{\gamma}$. The weighted trapezoidal rule is used to approximate the integration of the weak singular part and the bilinear interpolation for the rest of the integration domain. The detailed error estimates are presented for the designed numerical schemes of the tempered fractional Poisson equation. Extensive numerical experiments are performed to verify the convergence rates and show the effectiveness of the scheme, and the quantity of mean first exit time in statistical physics is simulated. The schemes and its numerical analysis still work well for the case that $\lambda=0$, i.e., fractional Laplacian; the corresponding numerical experiments are also given.

%In this article, we perform a finite difference approximation for the tempered fractional Laplacian in two dimension. We write expression (\ref{idef1}) as the weighted integral of a weak singular function by introducing the auxiliary function $\phi_{\gamma}$; we take the weighted trapezoidal rule to approximate the integration for the weak singular and the bilinear interpolation for the rest of integration domain. Next, we give the proof of truncation error and error estimates of solving the tempered fractional Poisson equation, of which we mainly use the Taylor's expansion and the error estimates about the bilinear interpolation. In Numerical experiments, we verify the predicted convergence rates and discuss the parameters' influence on the error and convergence rates. What's more, we verify the validity of the method for fractional Laplacian by setting the parameter $\lambda=0$. In the future, we'll apply the idea of our method to other non-local diffusion equations in two dimension.

\section*{Acknowledgments} This work was supported by the National Natural Science Foundation of China under Grant No. 11671182, and the Fundamental Research Funds for the Central Universities under Grant No. lzujbky-2017-ot10.

\section*{Appendix}

\appendix

\section{Numerically calculating $(\Delta+\lambda)^{\frac{\beta}{2}}$ performed on a given function}

According to the equation $-(\Delta+\lambda)^{\frac{\beta}{2}} u(x,y)=f(x,y)$, we can compute $-(\Delta+\lambda)^{\frac{\beta}{2}} u(x,y)$ to get the source term $f(x,y)$. Since the singularity and non-locality of $-(\Delta+\lambda)^{\frac{\beta}{2}} u(x,y)$, one can't directly approximate it by the trapezoidal rule. Now we provide the technique to calculate it. For fixed $(x,y)$, we denote
\begin{equation}
  \begin{split}
   &r_1=\sup_{\begin{subarray}{c}
    (\xi,\eta)\in\partial \Omega
    \end{subarray}}\max(|x-\xi|,|y-\eta|)\\
    &r_2=\inf_{\begin{subarray}{c}
    (\xi,\eta)\in\partial \Omega
    \end{subarray}}\sqrt{(x-\xi)^2+(y-\eta)^2}.\\
  \end{split}
\end{equation}
Without loss of generality, we set $\Omega=(-1,1)\times(-1,1)$. For any $(x,y)\in\Omega$, we denote $A_1$ as a square whose length is $2r_1$ and center is located at $(x,y)$ and $A_2$ as a square whose length is $2r_2$ and center is located at $(x,y)$. To compute the source term $f(x,y)$, we divide the domain into four parts, i.e., $R\times R=(R\times R)/A_1\bigcup (A_1/\Omega)\bigcup(\Omega/A_2)\bigcup A_2$, shown in Figure \ref{Integral_region}.%, we present the regional subdivision for any fixed point.

\begin{figure}[H]
  \centering
  % Requires \usepackage{graphicx}
  \includegraphics[width=6cm,height=6cm,angle=0]{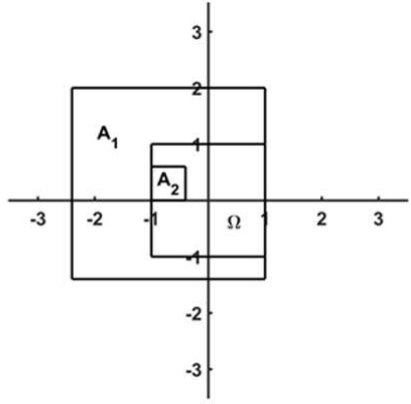}

  \centering\caption{Division of the integral region for fixed $(x,y)$}\label{Integral_region}
\end{figure}

For the term
\begin{equation}\label{equimpleR1}
\int\int_{(R\times R)/(A_1)}\frac{u(\xi,\eta)-u(x,y)}{e^{\lambda\sqrt{(x-\xi)^2+(y-\eta)^2}}\left(\sqrt{(x-\xi)^2+(y-\eta)^2}\right)^{{2+\beta}}}d\xi d\eta,
\end{equation}
since $\textbf{supp}~u(x,y)\in\Omega$, Eq. (\ref{equimpleR1}) can be rewritten as
\begin{equation}\label{equimpleR2}
-u(x,y)\int\int_{(R\times R)/(A_1)}\frac{1}{e^{\lambda\sqrt{(x-\xi)^2+(y-\eta)^2}}\left(\sqrt{(x-\xi)^2+(y-\eta)^2}\right)^{{2+\beta}}}d\xi d\eta.
\end{equation}
Next, we establish polar coordinates at $(x,y)$ and let $x-\xi=r\cos(\theta)$, $y-\eta=r\sin(\theta)$. Then, by simple calculation, we can obtain 
\begin{equation}\label{equappa1}
  \begin{split}
    &\int\int_{(R\times R)/(A_1)}\frac{1}{e^{\lambda\sqrt{(x-\xi)^2+(y-\eta)^2}}\left(\sqrt{(x-\xi)^2+(y-\eta)^2}\right)^{{2+\beta}}}d\xi d\eta\\
   =&\int_{0}^{\frac{\pi}{4}}\int_{\frac{r_1}{\cos(\theta)}}^{\infty}\frac{1}{r^{1+\beta}e^{\lambda r}}dr d\theta+\int_{\frac{\pi}{4}}^{\frac{2\pi}{4}}\int_{\frac{r_1}{\cos(\frac{\pi}{2}-\theta)}}^{\infty}\frac{1}{r^{1+\beta}e^{\lambda r}}dr d\theta\\
   &+\int_{\frac{2\pi}{4}}^{\frac{3\pi}{4}}\int_{\frac{r_1}{\cos(\theta-\frac{\pi}{2})}}^{\infty}\frac{1}{r^{1+\beta}e^{\lambda r}}dr d\theta+\int_{\frac{3\pi}{4}}^{\frac{4\pi}{4}}\int_{\frac{r_1}{\cos(\pi-\theta)}}^{\infty}\frac{1}{r^{1+\beta}e^{\lambda r}}dr d\theta\\
   &+\int_{\frac{4\pi}{4}}^{\frac{5\pi}{4}}\int_{\frac{r_1}{\cos(\theta-\pi)}}^{\infty}\frac{1}{r^{1+\beta}e^{\lambda r}}dr d\theta+\int_{\frac{5\pi}{4}}^{\frac{6\pi}{4}}\int_{\frac{r_1}{\cos(\frac{3\pi}{2}-\theta)}}^{\infty}\frac{1}{r^{1+\beta}e^{\lambda r}}dr d\theta\\
   &+\int_{\frac{6\pi}{4}}^{\frac{7\pi}{4}}\int_{\frac{r_1}{\cos(\theta-\frac{3\pi}{2})}}^{\infty}\frac{1}{r^{1+\beta}e^{\lambda r}}dr d\theta+\int_{\frac{7\pi}{4}}^{\frac{8\pi}{4}}\int_{\frac{r_1}{\cos(2\pi-\theta)}}^{\infty}\frac{1}{r^{1+\beta}e^{\lambda r}}dr d\theta\\
   =&8\int_{0}^{\frac{\pi}{4}}\int_{\frac{r_1}{\cos(\theta)}}^{\infty}\frac{1}{r^{1+\beta}e^{\lambda r}}dr d\theta.
  \end{split}
  \end{equation}
  When $\lambda=0$, we have
  \begin{equation}
  \begin{split}
   &\int\int_{(R\times R)/(A_1)}\frac{1}{e^{\lambda\sqrt{(x-\xi)^2+(y-\eta)^2}}\left(\sqrt{(x-\xi)^2+(y-\eta)^2}\right)^{{2+\beta}}}d\xi d\eta\\
   =&8\int_{0}^{\frac{\pi}{4}}\int_{\frac{r_1}{\cos(\theta)}}^{\infty}\frac{1}{r^{1+\beta}}dr d\theta\\
   =&\frac{8}{\beta}\int_{0}^{\frac{\pi}{4}}\left(\frac{r_1}{\cos(\theta)}\right)^{-\beta}d\theta.
  \end{split}
\end{equation}
We just approximate it by the trapezoidal rule in a finite interval. When $\lambda\neq 0$, we use the trapezoidal rule to approximate (\ref{equappa1}) after suitable truncation.

For the term
\begin{equation}\label{equimpleA21}
\int\int_{A_2}\frac{u(\xi,\eta)-u(x,y)}{e^{\lambda\sqrt{(x-\xi)^2+(y-\eta)^2}}\left(\sqrt{(x-\xi)^2+(y-\eta)^2}\right)^{{2+\beta}}}d\xi d\eta,
\end{equation}
using its symmetry leads to
\begin{equation}\label{equimpleA22}
\begin{split}
  &\int\int_{A_2}\frac{u(\xi,\eta)-u(x,y)}{e^{\lambda\sqrt{(x-\xi)^2+(y-\eta)^2}}\left(\sqrt{(x-\xi)^2+(y-\eta)^2}\right)^{{2+\beta}}}d\xi d\eta\\
   =&\int_{-r_2}^{r_2}\int_{-r_2}^{r_2}\frac{u(x+\xi,y+\eta)-u(x,y)}{e^{\lambda\sqrt{\xi^2+\eta^2}}\left(\sqrt{\xi^2+\eta^2}\right)^{{2+\beta}}}d\xi d\eta\\
   =&\int_{0}^{r_2}\int_{0}^{r_2}\frac{u(x+\xi,y+\eta)+u(x-\xi,y-\eta)+u(x+\xi,y-\eta)+u(x-\xi,y+\eta)-4u(x,y)}{e^{\lambda\sqrt{\xi^2+\eta^2}}\left(\sqrt{\xi^2+\eta^2}\right)^{{2+\beta}}}d\xi d\eta.\\
   \end{split}
\end{equation}

Because of the weak singularity, we try to compute it in polar coordinates. Let $\xi=r\cos(\theta)$, $\eta=r\sin(\theta)$. Then Eq. (\ref{equimpleA22}) can be rewritten as
\begin{equation}\label{equimpleA23}
 \begin{split}
 &\int\int_{A_2}\frac{u(\xi,\eta)-u(x,y)}{e^{\lambda\sqrt{(x-\xi)^2+(y-\eta)^2}}\left(\sqrt{(x-\xi)^2+(y-\eta)^2}\right)^{{2+\beta}}}d\xi d\eta\\
   =&\int_{0}^{\frac{\pi}{4}}\int_{0}^{\frac{r_2}{\cos(\theta)}}\left(u(x+r\cos(\theta),y+r\sin(\theta))+u(x-r\cos(\theta),y+r\sin(\theta))\right.\\
   &\left.+u(x+r\cos(\theta),y-r\sin(\theta))+u(x-r\cos(\theta),y-r\sin(\theta))-4u(x,y)\right)r^{-1-\beta}e^{-\lambda r}dr d\theta\\
   &+\int_{\frac{\pi}{4}}^{\frac{2\pi}{4}}\int_{0}^{\frac{r_2}{\cos(\frac{\pi}{2}-\theta)}}\left(u(x+r\cos(\theta),y+r\sin(\theta))+u(x-r\cos(\theta),y+r\sin(\theta))\right.\\
   &\left.+u(x+r\cos(\theta),y-r\sin(\theta))+u(x-r\cos(\theta),y-r\sin(\theta))-4u(x,y)\right)r^{-1-\beta}e^{-\lambda r}dr d\theta.\\
  \end{split}
\end{equation}
In (\ref{equimpleA23}), for some special function, such as $u(x,y)=(1-x^2)^2(1-y^2)^2$, we can expand it as
 \begin{equation}
   \begin{split}
     &\left(u(x+r\cos(\theta),y+r\sin(\theta))+u(x-r\cos(\theta),y+r\sin(\theta))\right.\\
   &\left.+u(x+r\cos(\theta),y-r\sin(\theta))+u(x-r\cos(\theta),y-r\sin(\theta))-4u(x,y)\right)r^{-1-\beta}e^{-\lambda r}\\
     =&4 r^{1-\beta}e^{-\lambda r} \left(r^6 \sin ^4(\theta) \cos ^4(\theta)+6 r^4 x^2 \sin ^4(\theta) \cos ^2(\theta)+6 r^4 y^2 \sin ^2(\theta) \cos^4(\theta)-2 r^4 \sin ^2(\theta) \cos ^4(\theta)\right.\\
     &-2 r^4 \sin ^4(\theta) \cos ^2(\theta)+r^2 x^4 \sin ^4(\theta)+36 r^2 x^2 y^2 \sin ^2(\theta) \cos ^2(\theta)-2 r^2 x^2 \sin ^4(\theta)\\
     &-12 r^2 x^2 \sin ^2(\theta) \cos ^2(\theta)+r^2 y^4 \cos ^4(\theta)-2 r^2 y^2 \cos ^4(\theta)-12 r^2 y^2 \sin ^2(\theta) \cos ^2(\theta)\\
     &+r^2 \sin ^4(\theta)+r^2 \cos ^4(\theta)+4 r^2 \sin ^2(\theta) \cos ^2(\theta)+6 x^4 y^2 \sin ^2(\theta)-2 x^4 \sin ^2(\theta)+6 x^2 y^4 \cos ^2(\theta)\\
     &-12 x^2 y^2 \sin ^2(\theta)-12 x^2 y^2 \cos ^2(\theta)+4 x^2 \sin ^2(\theta)+6 x^2 \cos ^2(\theta)-2 y^4 \cos ^2(\theta)+6 y^2 \sin ^2(\theta)\\
     &\left.+4 y^2 \cos ^2(\theta)-2 \sin ^2(\theta)-2 \cos ^2(\theta)\right).
   \end{split}
 \end{equation}
  When $\lambda=0$, the inner integration about $r$ can be calculated analytically, so we just need to approximate the outer integration about $\theta$ by the trapezoidal rule. When $\lambda\neq0$, we can transform the inner integration about $r$ to a nonsingular numerical integration through integration by parts.

For the another two terms, 
\begin{equation}\label{equimpleR1l}
\int\int_{(A_1/\Omega)\bigcup(\Omega/A_2)}\frac{u(\xi,\eta)-u(x,y)}{e^{\lambda\sqrt{(x-\xi)^2+(y-\eta)^2}}\left(\sqrt{(x-\xi)^2+(y-\eta)^2}\right)^{2+\beta}}d\xi d\eta,
\end{equation}
can be integrated by the trapezoidal rule directly.

\section{key points of code implementation}
When solving the tempered fractional Poisson problem with Dirichlet boundary conditions in two dimension, the computational complexity need to be carefully considered.

Firstly, since the weights $w_{i,j}$ can not be got analytically, we need to calculate it numerically. In order to get the weights (\ref{equweightoffl}), we need to calculate $G_{i,j}$, $G^\xi_{i,j}$, $G^\eta_{i,j}$ and $G^{\xi\eta}_{i,j}$. It is easy to see that $G_{i,j}$ depends on the mesh size $h$. To get $G_{i,j}$ for different $h$ conveniently, we rewrite (\ref{equdefG}) as
\begin{equation}
  G_{i,j}=\frac{1}{h^{2+\beta-\gamma}}g_{i,j}~~~~~~~~i,j\in N~{\rm and} ~(i,j)\neq(0,0),
\end{equation}
where
\begin{equation}
  g_{i,j}=\int_i^{i+1}\int_j^{j+1}\left(\sqrt{p^2+q^2}\right)^{\gamma-2-\beta}dp dq~~~~~~~i,j\in N~{\rm and} ~(i,j)\neq(0,0).
\end{equation}
We can calculate $g_{i,j}$ by the trapezoidal formula. And the same skill can be used to calculate $G^\xi_{i,j}$, $G^\eta_{i,j}$ and $G^{\xi\eta}_{i,j}$ when $(i,j)\neq(0,0)$.

Secondly, for $G_{0,0}$, we can integrate it in polar coordinates to deal with the singularity, that is
  \begin{equation}\label{equapb1}
  \begin{split}
    G_{0,0}=&\int_{0}^{\frac{\pi}{2}}\int_{0}^{h}r^{\gamma-1-\beta}dr d\theta+\int_{\xi_0}^{\xi_1}\int_{\sqrt{h^2-\xi^2}}^{\eta_1}(\xi^2+\eta^2)^{\frac{\gamma-2-\beta}{2}}d\eta d\xi\\
    =&\frac{\pi}{2(\gamma-\beta)}h^{\gamma-\beta}+\int_{\xi_0}^{\xi_1}\int_{\sqrt{h^2-\xi^2}}^{\eta_1}(\xi^2+\eta^2)^{\frac{\gamma-2-\beta}{2}}d\eta d\xi.
    \end{split}
  \end{equation}
By the way, we only need to use the trapezoidal rule to calculate the second term in (\ref{equapb1}).

Thirdly, when using the polar coordinates to calculate $G^\infty$, the integration in two dimension can be translated to a bounded integration in one dimension when $\lambda=0$. When $\lambda\neq 0$, $G^\infty$ can be calculated effectively after a suitable truncation.
Lastly, when solving the linear equation $B\mathbf{U}_h=F$, the computation costs are expensive if we solve it directly. So we use the structure of the symmetric block Toeplitz matrix with Toeplitz block, the memory requirements can be reduced from $O(N^4)$ to $O(N^2)$.
 %Since the symmetry of matrix, we use the Conjugate Gradient to avoid calculating the inverse of $B$.
And the Fast Fourier transform is used to reduce computational cost from $O(N^6)$ to $O(N^2 \log N^2)$.

\newpage

\end{document}